\documentclass{amsart}       % onecolumn (second format)
\usepackage{latexsym,amsmath,amssymb,graphicx}
\usepackage[all,web]{xy}
\usepackage[shortlabels]{enumitem}
\usepackage{color}
\usepackage{hyperref}
\usepackage{todonotes}

%\definechangesauthor[name={Lea1}, color=orange]{Lea}

%\setremarkmarkup{(#2)}

%%%%%%%%%%%%%%%%%%%%% PER EVIDENZIARE TESI IN GIALLO

\usepackage{xcolor}

\def\xyellowspace{%
  \sbox0{\colorbox{yellow}{\strut\ }}%\mathbf{}
  \dimen0=\wd0\relax
  \hskip0pt\cleaders\box0\hskip\dimen0\hskip0pt}

\begingroup
\catcode`\ =\active%
\gdef\makeyellowspace{\let \xyellowspace\catcode`\ =\active}%
\endgroup

\def\?#1{\colorbox{yellow}{\strut#1}}

  %ESEMPIO \begin{evidenzia}
%\?{Testo} \?{evidenziato} %
%\end{evidenzia}

 %%%%%%%%%%%%%%%%%%%%%%%%%%%%%%%%%%%%%%

\DeclareFontFamily{OT1}{rsfs10}{}
\DeclareFontShape{OT1}{rsfs10}{m}{n}{ <-> rsfs10 }{}
\DeclareMathAlphabet{\mathscript}{OT1}{rsfs10}{m}{n}

       % \im    = immagine (di una funzione)
       % \id    = funzione identita'
   % \Spec  = Spec di un anello
   % \Proj  = Proj di un anello graduato
 % \Specm = Spec dei punti chiusi di un anello
\DeclareMathOperator{\Hom}{Hom}     % \Hom   = gruppo Hom (diritto)
\DeclareMathOperator{\Tors}{Tors}    % \Hom   = sottogruppo di torsione
     % \Ext   = gruppo Ext (diritto)
     % \Exc   = luogo eccezionale
 % \ext = fascio Ext (storto)
 % \hm = fascio Hom (storto)
\DeclareMathOperator{\Div}{Div}     % \Div   = gruppo dei divisori di Weil
\DeclareMathOperator{\Pic}{Pic}     % \Pic   = gruppo di Picard
\DeclareMathOperator{\Cl}{Cl}       % \Cl    = gruppo dei divisori di Weil mod. linear equivalence
\DeclareMathOperator{\CaCl}{CaCl}   % \CaCl  = gruppo dei divisori di Cartier mod. linear equivalence
     % \Aut   = gruppo degli automorfismi
\DeclareMathOperator{\rk}{rk}       % \rk    = rango
\DeclareMathOperator{\coker}{coker} % \coker = conucleo
   % \Sing  = luogo singolare
     % \Def   = spazio di Kuranishi
   % \Supp  = supporto
     % \Vol   = volume
\DeclareMathOperator{\mult}{mult}
%\DeclareMathOperator{\Aut}{Aut}        % \Mov   = moving cone
     % \Nef   = cono dei divisori nef
  % \Relint   = interno di un cono nel suo span lineare
 % \codim = codimensione
 % \bideg = bigrado
   % \diag  = matrice diagonale
  % \def   = dimensione dello spazio di Kuranishi
     % \cone  = cono su
     % \fan   = fan generato da ...
   % \conv  = inviluppo convesso
     % \crk   = co-rank
   % \Hess  = Hessiam matrix
     % \lcm   = minimo comun denominatore
     % \HNF   = Hermite normal form
     % \SNF   = Hermite normal form
     % \REF   = Hermite normal form

\title[$\Pic(X)$ in $\Cl(X)$: the toric case]{Embedding the Picard group inside the class group: the case of $\Q$-factorial complete toric varieties}

%\subtitle{Do you have a subtitle?\\ If so, write it here}

%\titlerunning{Short form of title}        % if too long for running head

\author{Michele Rossi         \and
        Lea Terracini %etc.
}

\address{Dipartimento di Matematica, Universit\`a di Torino,
via Carlo Alberto 10, 10123 Torino} \email{michele.rossi@unito.it,
lea.terracini@unito.it}

\thanks{ The first author is partially supported by the I.N.D.A.M. as a member of the G.N.S.A.G.A.}

\keywords{$\Q$-factorial complete toric varieties, Cartier and Weil divisors, pure modules, free and torsion subgroups, localization, completion of fans}
\subjclass[2010]{14M25, 20K15, 20K10}

%\authorrunning{Short form of author list} % if too long for running head

\theoremstyle{plain}
\newtheorem{theorem}{Theorem}[section]
\newtheorem{proposition}[theorem]{Proposition}

\newtheorem{thm-def}[theorem]{Theorem--Definition}
\newtheorem{corollary}[theorem]{Corollary}

\newtheorem{lemma}[theorem]{Lemma}

\newtheorem*{a-proposition}{Proposition}

\theoremstyle{remark}
\newtheorem{remark}[theorem]{Remark}

\newtheorem{example}[theorem]{Example}

\theoremstyle{definition}
\newtheorem{definition}[theorem]{Definition}

\newtheorem*{step I}{Step I}
\newtheorem*{step II}{Step II}
\newtheorem*{step III}{Step III}
\newtheorem*{step IV}{Step IV}

%%  spaziature   %%
%

\newcommand{\halfline}{\vskip6pt}
%
%%%%%%%

\def \a{\alpha }
\def \b{\beta }

\def \s{\sigma }

\def \Ga{\Gamma }
\def \Si{\Sigma }

\def \u{\mathbf{u}}
\def \v{\mathbf{v}}
\def \n{\mathbf{n}}
\def \w{\mathbf{w}}
\def \t{\mathbf{t}}
\def \x{\mathbf{x}}
\def \z{\mathbf{z}}
\def \1{\mathbf{1}}
\def \0{\mathbf{0}}

\def\p2{\mathbb{P}^2}
\def\p3{\mathbb{P}^3}
\def\p4{\mathbb{P}^4}

\def\co{\mathcal{O}}

\def\rk{\operatorname{rk}}

\def\GL{\operatorname{GL}}

\def\Z{\mathbb{Z}}

\def\C{\mathbb{C}}
\def\R{\mathbb{R}}
\def\M{\mathbf{M}}
\def\Q{\mathbb{Q}}

\def\T{\mathbb{T}}

\def\SF{\mathcal{SF}}

\def\Ga{\Gamma}

\def\Weil{\mathcal{W}_\T}
\def\Cart{\mathcal{C}_\T}
\def\cM{\mathcal{M}}

 %\pagestyle{empty}
 %\DefineParaStyle{Maple Heading 4}
 %\DefineParaStyle{Maple Heading 2}
 %\DefineParaStyle{Maple Text Output}
 %\DefineParaStyle{Maple Bullet Item}
 %\DefineParaStyle{Maple Warning}
 %\DefineParaStyle{Maple Error}
 %\DefineParaStyle{Maple Dash Item}
% \DefineParaStyle{Maple Heading 3}
 %\DefineParaStyle{Maple Heading 1}
 %\DefineParaStyle{Maple Title}
 %\DefineParaStyle{Maple Normal}
% \DefineCharStyle{Maple 2D Input}
% \DefineCharStyle{Maple Maple Input}
 %\DefineCharStyle{Maple 2D Output}
 %\DefineCharStyle{Maple 2D Math}
 %\DefineCharStyle{Maple Hyperlink}

\begin{document}

\begin{abstract}Let $X$ be a $\Q$-factorial complete toric variety over an algebraic closed field of characteristic $0$. There is a canonical injection of the Picard group ${\rm Pic}(X)$ in the group ${\rm Cl}(X)$ of classes of Weil divisors.  These two groups are finitely generated abelian groups; whilst the first one is a free group, the second one may have torsion. We investigate algebraic and geometrical conditions under which the image of ${\rm Pic}(X)$ in ${\rm Cl}(X)$ is contained in a free part of the latter group.\end{abstract}

\maketitle

\tableofcontents

\section*{Introduction}
Let $X$ be an irreducible and normal algebraic variety over an algebraic closed field $k$ of characteristic $0$. Then the group $H^0(X,\mathcal{K}^*/\co^*)$ of Cartier divisors of $X$ can be represented as the subgroup of locally principal divisors of the group $\Div(X)$ of Weil divisors \cite[Rem.~II.6.11.2]{Hartshorne}. Quotienting both this groups by their subgroup of principal divisors one realizes the group $\CaCl(X)$ of classes of Cartier divisors as a subgroup of the group $\Cl(X)$ of classes of Weil divisors. In addition it turns out a canonical isomorphism between $\CaCl(X)$ and the Picard group $\Pic(X)$ of classes of isomorphic line bundles on $X$ \cite[Propositions~II.6.13,15]{Hartshorne}, so giving a canonical injection
\begin{equation}\label{iniezione}
  \xymatrix{\Pic(X)\ \ar@{^(->}@<-1pt>[r]
  &\ \Cl(X)}\,.
\end{equation}
For this reason, in this paper we will not distinguish between linear equivalence classes of Cartier divisors and isomorphism classes of line bundles, so identifying $\CaCl(X)=\Pic(X)$.

Assume now that $\Cl(X)$ is finitely generated. It is well known that a finitely generated abelian group decomposes (non canonically) a a direct sum of a free part and its torsion subgroup. Both $\Pic(X)$ and $\Cl(X)$ may have non trivial torsion  and clearly (\ref{iniezione}) induces an injection $\Tors(\Pic(X))\hookrightarrow\Tors(\Cl(X))$. Then the following natural question arises:
\begin{itemize}
  \item[(*)] \emph{let $F_C$ and $F_W$ be \emph{free parts} of $\Pic(X)$ and $\Cl(X)$, respectively, that is free subgroups such that}
      \begin{equation*}
        \Pic(X)=F_C\oplus\Tors(\Pic(X))\,,\quad\Cl(X)=F_W\oplus\Tors(\Cl(X))\,;
    \end{equation*}
    \emph{under which conditions on $X$, $F_C\subseteq\Pic(X)$ and $F_W\subseteq\Cl(X)$ the injection (\ref{iniezione}) may induce an injection}
    \begin{equation}\label{iniezione-f}
      \xymatrix{F_C\ \ar@{^(->}@<-1pt>[r]
  &\ F_W}\,?
    \end{equation}
    \end{itemize}
    One should expect some geometric condition on $X$ guaranteeing the existence of the injection (\ref{iniezione-f}), but we could not find anything, in the current literature, helping us to answering problem (*). Motivated by algebraic considerations (see Proposition \ref{prop:equivalenze}) we call \emph{pure} a normal, irreducible algebraic variety $X$ such that $\Cl(X)$ is finitely generated and there exist free parts $F_C$ and $F_W$ positively answering problem (*) (see Definition~\ref{def:pure}). On the contrary if for each choice of free parts $F_C$ and $F_W$ the injection (\ref{iniezione}) does not induce any injection (\ref{iniezione-f}) then $X$ is called \emph{impure}. Obvious examples of pure varieties are given by those varieties $X$ whose class group $\Cl(X)$ is finitely generated and free, and by smooth varieties admitting a finitely generated class group. Examples of impure varieties are in general more involved: some of them are given in \S~\ref{ssez:esempi}.

    \halfline In the present paper we will consider the easier case of a $\Q$-factorial complete toric variety $X$, essentially for three reasons:
    \begin{enumerate}[(a)]
      \item $\Cl(X)$ is a finitely generated abelian group (see e.g. \cite[Thm.~4.1.3]{CLS})
      \item $\Pic(X)$ is free, i.e. $\Tors(\Pic(X))=0$ (see e.g. \cite[Prop.~4.2.5]{CLS})
      \item locally principal divisors can be easily described by means of principal divisors on affine open subsets of $X(\Si)$ associated with maximal cones of the fan $\Si$.
    \end{enumerate}
    Conditions (a) and (b) translate problem (*) in the following
    \begin{itemize}
      \item[(**)] \emph{under which conditions on $X$ there exists a free part $F$ of $\Cl(X)$ such that (\ref{iniezione}) induces an injection $\Pic(X)\hookrightarrow F$ ?}
    \end{itemize}
    The main result of the present paper is a sufficient condition for a $\Q$-factorial complete toric variety to be a pure variety. This is given by Theorem \ref{teo:gcdmult1} and can be geometrically summarized as follows:

    \begin{theorem}[see Theorem \ref{teo:gcdmult1} and Remark \ref{rem:geom}]\label{thm:intro}
      Let $X(\Si)$ be a $\Q$-factorial complete toric variety of dimension $n$. Then it admits a canonical covering $Y(\widehat{\Si})\twoheadrightarrow X(\Si)$, un\-ra\-mi\-fied in codimension $1$, such that the class group $\Cl(Y)$ is free (this follows by \cite[Thm.~2.2]{RT-QUOT}). Both $X$ and $Y$ are orbifolds \cite[Thm.~3.1.19\,(b)]{CLS}; let $$\{U_{\widehat{\s}}\}_{\widehat{\s}\in\widehat{\Si}(n)}$$
      be the collection of affine charts given by the maximal cones and covering $Y$. Calling $\mult(\widehat{\s})$ the maximum order of a quotient singularity in the affine chart $U_{\widehat{\s}}$ (i.e. the \emph{multiplicity} of the cone $\widehat{\s}\in\widehat{\Si}(n)$, see Definition~\ref{def:molteplicità}), $X$ is a pure variety if $m_\Si:=\gcd\{\mult(\widehat{\s})\,|\,\widehat{\s}\in\widehat{\Si}(n)\}$ is coprime with the order of the Galois group of the covering $Y\to X$.
    \end{theorem}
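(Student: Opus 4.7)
The plan is to use the canonical covering $\pi\colon Y\to X$ to translate the purity question into an extension problem for homomorphisms, and then to exploit the hypothesis $\gcd(m_\Sigma,|G|)=1$ via a local-to-global analysis on the charts of $Y$, prime by prime.

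\emph{Reformulation via the cover.} Since $\pi^*\colon\Cl(X)\twoheadrightarrow\Cl(Y)$ is surjective with kernel $T_W=\Tors(\Cl(X))\cong G$ and $\Cl(Y)$ is free, sections $\sigma\colon\Cl(Y)\to\Cl(X)$ of $\pi^*$ exist, and the free parts of $\Cl(X)$ are precisely the images $F_W=\sigma(\Cl(Y))$ of such sections. The condition $\Pic(X)\subseteq F_W$ amounts to $\sigma\circ\pi^*|_{\Pic(X)}=\id_{\Pic(X)}$, so $X$ is pure iff some section of $\pi^*$ agrees on $\pi^*(\Pic(X))$ with the tautological lift $\pi^*(p)\mapsto p$. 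Fixing any initial section $\sigma_0$, the discrepancy $\alpha(\pi^*(p)):=p-\sigma_0(\pi^*(p))\in T_W$ is a well-defined homomorphism $\alpha\colon\pi^*(\Pic(X))\to T_W$, and purity is equivalent to $\alpha$ admitting an extension to a homomorphism $\Cl(Y)\to T_W$.

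\emph{Prime-by-prime local argument.} Rather than asking for the obstruction group $\Ext^1(\Cl(Y)/\pi^*(\Pic(X)),T_W)$ to vanish entirely (which would be strictly stronger than the hypothesis), I would analyze the extensibility of $\alpha$ one prime at a time. For each prime $p$ dividing $|G|$, the hypothesis $\gcd(m_\Sigma,|G|)=1$ forces $p\nmid m_\Sigma$, hence the existence of at least one maximal cone $\widehat\sigma_p\in\widehat\Sigma(n)$ with $p\nmid\mult(\widehat\sigma_p)$, equivalently $\Cl(U_{\widehat\sigma_p})_{(p)}=0$. Using this $p$-smooth chart together with the description of $\pi^*(\Pic(X))\subseteq\Pic(Y)$ as those classes whose local Cartier characters $m_{\widehat\sigma}\in M'=(N')^\vee$ actually lie in $M=N^\vee$ (where $M'/M\cong G$), I would construct an adjustment of the lifts $\sigma_0(g_i)$ of a basis of $\Cl(Y)$ whose effect is to kill the $p$-primary component of $\alpha$. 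By the primary decomposition $T_W=\bigoplus_{p\mid|G|}T_{W,(p)}$ these $p$-local adjustments can be performed independently and glued (via the Chinese Remainder Theorem) into a single modification yielding a section $\sigma$ with $\alpha=0$; its image $F_W=\sigma(\Cl(Y))$ is then a free part of $\Cl(X)$ containing $\Pic(X)$, proving purity.

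\emph{Main obstacle.} The hard part will be the local step: namely, showing that a single $p$-smooth cone $\widehat\sigma_p$ forces the $p$-part of the specific obstruction $\alpha$ to be trivializable, even when other maximal cones retain nontrivial $p$-torsion in their local class groups. The essential tool should be the explicit lattice-theoretic description of $\Pic(Y)/\pi^*(\Pic(X))$ as the image of a map into $\bigoplus_{\widehat\sigma}M'/M$ together with the identification $M'/M\cong G$: at the chart $\widehat\sigma_p$ the triviality of $\Cl(U_{\widehat\sigma_p})_{(p)}$ annihilates the $p$-part of the obstruction to integrality in $M$ there, and the freeness of $\Cl(Y)$ propagates this local triviality to a global $p$-local extension of $\alpha$, completing the argument.
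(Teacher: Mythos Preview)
Your reformulation of purity as an extension problem for $\alpha\colon\pi^*(\Pic(X))\to T_W$ is correct, and the prime-by-prime reduction is exactly the paper's Proposition~\ref{prop:localizza}. The overall architecture therefore matches the paper's proof of Theorem~\ref{teo:gcdmult1} via Proposition~\ref{prop:localizza2}.

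The genuine gap is the ``local step'' you yourself flag as the main obstacle: you never actually show that a single $p$-smooth cone $\widehat\sigma_p$ forces the $p$-part of $\alpha$ to extend. Your final paragraph contains no argument, only the assertions that the triviality of $\Cl(U_{\widehat\sigma_p})_{(p)}$ ``annihilates the $p$-part of the obstruction to integrality in $M$ there'' and that ``the freeness of $\Cl(Y)$ propagates this local triviality to a global $p$-local extension''. Neither claim is justified, and the second is precisely the point at issue: other cones may still have $p$-torsion in their local class groups, so one needs a concrete reason why the data at the single good chart controls the global extension problem. The paper supplies exactly this missing mechanism, but by working one level up, on $\Weil(X)\cong\Z^{n+r}$ rather than on $\Cl(X)$. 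The crucial input is the elementary description $\Cart(X)=\bigcap_{I\in\mathcal I^\Sigma}\bigl(\mathcal L_r(V)\oplus E_I\bigr)$, valid for \emph{every} maximal cone $I$. Picking the one $I$ with $p\nmid\det(Q_I)=\mult(\widehat\sigma^I)$ and setting $F_p=E_{I,(p)}$, one gets both $\Cart(X)_{(p)}\subseteq\mathcal L_r(V)_{(p)}\oplus F_p$ (from the Cartier description) and $\Z_{(p)}^{n+r}=\mathcal L_r(\widehat V)_{(p)}\oplus F_p$ (from invertibility of $Q_I$ over $\Z_{(p)}$); Proposition~\ref{prop:localizza2} then finishes. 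The lesson is that lifting to the free module of torus-invariant Weil divisors makes the ``propagation'' step automatic, whereas at the class-group level you would need to produce the extension by hand.
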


This is not a necessary condition: Example \ref{es:puro} gives a counterexample.

Section \S\,\ref{ssez:esempi} is entirely devoted to give non trivial examples of pure and impure varieties. A big class of non trivial examples of pure varieties is  exhibited in Section \S\,\ref{ssez:SF1}: namely it is given by
\begin{itemize}
\item \emph{all $\Q$-factorial complete toric varieties whose \emph{small $\Q$-factorial modifications (s$\Q$m)} are actually isomorphisms}.
\end{itemize}
Here \emph{s$\Q$m of $X$} means a birational map $f:X\dashrightarrow Y$ such that $f$ is an isomorphism in codimension 1 and $Y$ is still a complete $\Q$-factorial toric variety.
By the combinatorial point of view the previous geometric property translates in  requiring that \emph{there is a unique simplicial and complete fan $\Si$ admitting 1-skeleton given by $\Si(1)$}. Our proof that those varieties are pure (see Proposition~\ref{prop:unsolofan}) passes through showing that every maximal simplicial cone generated by rays in $\Si(1)$ and not containing any further element of $\Si(1)$ other than its generators (we call \emph{minimal} such a  maximal simplicial cone) is  actually a cone of a complete and simplicial fan. This fact produces a completion procedure of fans looking to be of some interest by itself (see Lemma\,\ref{lem:esistenzafan} and \S\,\ref{sssez:completamento}), when compared with standard  completion procedures  \cite[Thm.~III.2.8]{Ewald96}, \cite{EwaldIshida06}, \cite{Rohrer11}.

Further results of the present paper are given by:
\begin{itemize}
  \item[-] algebraic considerations given in \S~\ref{sez:algebra}; apart from the definition of a pure submodule given in Definition~\ref{def:puremodule} and some consequences appearing in Proposition~\ref{prop:equivalenze}, the rest of this section consists of original considerations, as far as we know;
  \item[-] a characterization of $\Pic(X)$ as a subgroup of $\Cl(X)$, when $X$ is a pure, $\Q$-factorial, complete, toric variety: in \cite[Thm.~2.9.2]{RT-LA&GD} we gave a similar characterization in the case of a poly weighted space (PWS: see Notation \ref{ssez:notazioni}), that is when $\Cl(X)$ is free; a first generalization was given in \cite[Thm.~3.2\,(3)]{Erratum} which is here improved in \S~\ref{sez:pic} and in particular by Theorem~\ref{thm:pic};
  \item[-] an  example of a 4-dimensional simplicial fan whose completions necessarily require the addition of some new rays (see Example \ref{ex:noncompletabile}): Ewald, in his book \cite{Ewald96}, already announced the existence of examples of this kind (see the Appendix to Chapter III in \cite{Ewald96}), but we were not able to recover it. We then believe that Example \ref{ex:noncompletabile} may fill up a lack in the literature on these topics.
\end{itemize}

This paper is structured as follows. The first section gives all needed algebraic ingredients. Section 2 is devoted to state and prove the main result, given by Theorem~\ref{teo:gcdmult1}, and produce examples of pure and impure varieties in the toric setup (see \S\,\ref{ssez:esempi} and \S\,\ref{ssez:SF1}). Section 3 gives the above mentioned characterization of $\Pic(X)$ as a subgroup of $\Cl(X)$ when $X$ is a pure, $\Q$-factorial, complete, toric variety.

\section{Algebraic considerations}\label{sez:algebra}
Let $A $ be a PID, $\cM$ be a finitely generated $A$-module of rank $r$, and $T=\Tors_A(\cM)$.
\begin{definition} A \emph{free part} of $\cM$ is a free submodule $L\subseteq  \cM$ such that $\cM=L\oplus T$.
\end{definition}
It is well known that a free part of $\cM$ always exists.\\
Let $H\subseteq \cM$ be a free submodule of rank $h$. In general it is not true that $H$ is contained in a free part of $\cM$. For example let $A=\Z$, $\cM=\Z\oplus \Z/2\Z$ and let $H=\langle(2,1)\rangle$.\\

\begin{proposition}\label{prop:sopra} There exist elements $a_1,\ldots, a_h\in A$ such that $a_1|a_2 |\ldots |a_h$ satisfying the following property: every free part $L$ of $\cM$ has a basis $\mathbf{f}_1,\ldots, \mathbf{f}_r$ such that $a_1\mathbf{f}_1+t_1, \ldots, a_h\mathbf{f}_h+t_h$ is a basis of $H$ for suitable $t_1,\ldots, t_h\in T$.
\end{proposition}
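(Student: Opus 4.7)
The plan is to reduce the statement to the classical Smith-normal-form / elementary-divisors theorem for a submodule of a finitely generated free module over the PID $A$, and then to verify that the resulting invariants $a_1|\cdots|a_h$ do not depend on the choice of free part $L$.

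First, fix an arbitrary free part $L$ of $\cM$, so that $\cM=L\oplus T$, and denote by $\pi:\cM\twoheadrightarrow L$ the projection with kernel $T$. Since $H$ is free and $T$ is torsion, $\ker(\pi|_H)=H\cap T=0$; hence $\pi|_H$ is injective and $\pi(H)$ is a free submodule of $L$ of rank $h$. Apply the elementary divisors theorem to the inclusion $\pi(H)\hookrightarrow L$: there is a basis $\mathbf{f}_1,\ldots,\mathbf{f}_r$ of $L$ and elements $a_1|a_2|\cdots|a_h$ in $A$ such that $a_1\mathbf{f}_1,\ldots,a_h\mathbf{f}_h$ is a basis of $\pi(H)$. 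For each $i$ choose $\mathbf{h}_i\in H$ with $\pi(\mathbf{h}_i)=a_i\mathbf{f}_i$; then $\mathbf{h}_i=a_i\mathbf{f}_i+t_i$ with $t_i\in T$, and since $\pi|_H$ is an isomorphism onto $\pi(H)$ sending $\mathbf{h}_1,\ldots,\mathbf{h}_h$ to a basis, the elements $\mathbf{h}_1,\ldots,\mathbf{h}_h$ form a basis of $H$. This already gives the existence statement for the chosen $L$.

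The remaining — and main — point is to show that the sequence $a_1,\ldots,a_h$ is intrinsic to the pair $(\cM,H)$, i.e.\ does not depend on the free part $L$. The key observation is that the quotient $L/\pi(H)$ is canonically isomorphic to $\cM/(H+T)$: indeed the composition
\[
L\hookrightarrow\cM\twoheadrightarrow\cM/(H+T)
\]
is surjective (because $\cM=L+T\subseteq L+(H+T)$) with kernel exactly $L\cap(H+T)=\pi(H)$, since if $\ell=\mathbf{h}+t\in L$ with $\mathbf{h}\in H,\ t\in T$ then $\ell=\pi(\ell)=\pi(\mathbf{h})\in\pi(H)$, and conversely $\pi(H)=\pi(H+T)\subseteq L\cap(H+T)$ trivially. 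By the uniqueness part of the structure theorem, the invariant factors $a_1|\cdots|a_h$ of $\pi(H)$ inside $L$ are precisely the invariant factors of the torsion part of $\cM/(H+T)$ (the free rank of this quotient being $r-h$). As $\cM/(H+T)$ does not involve $L$, the $a_i$'s are canonically attached to $(\cM,H)$, and the same sequence works for every free part $L$.

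The only genuinely non-routine step is the intrinsic characterization of the $a_i$; once the isomorphism $L/\pi(H)\cong\cM/(H+T)$ is in hand, uniqueness of elementary divisors closes the argument. No further difficulty is expected, since the existence half for a single $L$ is a direct application of Smith normal form and the lifts $\mathbf{h}_i=a_i\mathbf{f}_i+t_i$ are automatic from the definition of $\pi$.
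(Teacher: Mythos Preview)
Your proof is correct and rests on the same core idea as the paper's: apply the elementary-divisor theorem to the image of $H$ in a free rank-$r$ module and then lift back.  The one organizational difference is that the paper maps $H$ into the \emph{canonical} quotient $\cM/T$ via $\pi:\cM\to\cM/T$, so the invariants $a_1\mid\cdots\mid a_h$ are intrinsic by construction; for a given free part $L$ one then pulls the adapted basis of $\cM/T$ back along the section $s:\cM/T\to L$ determined by $\cM=L\oplus T$.  You instead fix $L$ first, project onto it, and afterwards supply the invariance of the $a_i$ via the isomorphism $L/\pi(H)\cong\cM/(H+T)$.  Both routes are valid; the paper's is a bit more economical since it avoids the separate invariance step, while yours makes explicit the useful fact that the $a_i$ are the torsion invariant factors of $\cM/(H+T)$.
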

\begin{proof} Let $\pi:\cM\to \cM/T$ the quotient map. The group $\cM/T$ is free of rank $r$. The restriction of $\pi$ to $H$ is injective, since $H$ is free and $\ker(\pi)=T$. Therefore $\pi(H)$ is a subgroup of rank $h$ of the free group $\cM/T$. By the elementary divisor theorem, there exist a basis $\mathbf{\tilde f}_1,\ldots, \mathbf{\tilde f}_r $ and element $a_1,\ldots, a_h\in A$ such that $a_1|a_2 |\ldots |a_h$ and $a_1\mathbf{\tilde f}_1,\ldots, a_h\mathbf{\tilde f}_h$ is a basis of $\pi(H)$. Let $\mathbf{h}_1,\ldots, \mathbf{h}_h$ be the basis of $H$ such that $\pi(\mathbf{h}_i)=a_i\mathbf{\tilde f}_i$. \\
Now let $L$ be a free part of $\cM$. The decomposition $\cM=L\oplus T$ gives rise to a section $s:\cM/T \to L$, i.e. $\pi\circ s= id$. By putting $\mathbf{f}_i=s(\mathbf{\tilde f}_i)$ we get a basis $\mathbf{f}_1,\ldots, \mathbf{f}_r $ of $L$ such that $a_1\mathbf{f}_1,\ldots, a_h\mathbf{ f}_h$ is a basis of $s(\pi(H))$.  For $i=1,\ldots, h$ put $t_i=\mathbf{h}_i-a_i\mathbf{f}_i$. Then $t_i\in\ker(\pi)=T$, so that the claim is proved.
\end{proof}

\begin{remark} When $A=\Z$,
the objects whose existence is established by Proposition \ref{prop:sopra} are effectively computable. In fact, assume that $\cM= \Z^r\oplus T$, where $T$ is a finite group, and that $\mathbf{g}_1+s_1,\ldots, \mathbf{g}_h+s_h$ is a basis of $H$, with $\mathbf{g}_1,...,\mathbf{g}_h\in \Z^r$ and $s_1,\ldots, s_h\in T$. Let $G$ be the $h\times r$ matrix having rows $\mathbf{g}_1,...,\mathbf{g}_h$. It is possible to compute the Smith normal form $S$ of $G$ and matrices $U\in \GL_h(\Z)$, $V\in \GL_r(\Z)$ such that $UGV=S$. Then the rows of $V^{-1}$ give the basis $\mathbf{f}_1,\ldots, \mathbf{f}_r$, the diagonal entries of $S$ give $a_1,\ldots, a_h\in \Z$. Moreover we recover the elements $t_1,\ldots, t_h$ by putting (with the obvious notation)
$$\begin{pmatrix} t_1\\ \vdots  \\ t_h \end{pmatrix} = U \begin{pmatrix} s_1\\ \vdots  \\ s_h \end{pmatrix}.$$\end{remark}

The following definition is standard (see for example \cite[Ex. B-3.6]{Rotman})
\begin{definition}\label{def:puremodule}
Let $\cM$ be an $A$-module. A submodule $\cM'\subseteq \cM$ is said \emph{pure} if the following property is satisfied:
$$\hbox{if  $am\in \cM'$ for some $a\in A,m\in \cM$, then there is $m'\in \cM'$ such that $am'=am$. }$$
\end{definition}

\begin{proposition}\label{prop:equivalenze}

The following are equivalent:
\begin{itemize}
\item[a)] $H$ is contained in a free part of $\cM$.
\item[b)] The image of $T$ in $\cM/H$ is a free summand.
\item[c)] The image of $T$ in $\cM/H$ is a pure submodule.
\item[d)] Let  $L$ be a free part of $\cM$ and  $\mathbf{f}_1,\ldots, \mathbf{f}_r$ be a basis of $L$ as in Proposition \ref{prop:sopra}; then for $i=1,\ldots, h$ the element $t_i$ is divisible by $a_i$ in $T$, that is there exists $u_i\in T$ such that $t_i=a_iu_i$;
\end{itemize}
\end{proposition}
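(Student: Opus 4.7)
The plan is to prove the four conditions equivalent via the cyclic chain $(a)\Rightarrow(b)\Rightarrow(c)\Rightarrow(d)\Rightarrow(a)$. I shall fix once and for all a free part $L$ of $\cM$ with basis $\mathbf{f}_1,\ldots,\mathbf{f}_r$ as in Proposition~\ref{prop:sopra}, so that $a_1\mathbf{f}_1+t_1,\ldots,a_h\mathbf{f}_h+t_h$ is a basis of $H$; I write $\pi\colon \cM\to\cM/H$ for the quotient map and $\bar T:=\pi(T)$. The decisive structural observation, used throughout, is that $H\cap T=0$ (because $H$ is free and $T$ is torsion), so that $\pi|_T\colon T\to\bar T$ is an isomorphism of $A$-modules.

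For $(a)\Rightarrow(b)$, suppose $H$ lies inside a free part $L'$ of $\cM$, so $\cM=L'\oplus T$ and $H\subseteq L'$. A direct verification gives $\cM/H=(L'/H)\oplus\bar T$, so $\bar T$ is a direct summand. The step $(b)\Rightarrow(c)$ is immediate, as any direct summand is automatically pure in the sense of Definition~\ref{def:puremodule}.

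For $(c)\Rightarrow(d)$, assume $\bar T$ is pure in $\cM/H$. The relation $a_i\mathbf{f}_i+t_i\in H$ projects to $a_i\pi(\mathbf{f}_i)=-\pi(t_i)\in\bar T$, so purity of $\bar T$ produces $\bar w_i\in\bar T$ with $a_i\bar w_i=a_i\pi(\mathbf{f}_i)=-\pi(t_i)$. Lifting $\bar w_i$ back through the isomorphism $\pi|_T$ to $w_i\in T$, the element $a_iw_i+t_i$ lies in both $T$ and $H$, hence is zero, yielding $t_i=a_i(-w_i)$. For $(d)\Rightarrow(a)$, given $t_i=a_iu_i$ with $u_i\in T$, set $\mathbf{f}'_i:=\mathbf{f}_i+u_i$ for $i\le h$ and $\mathbf{f}'_i:=\mathbf{f}_i$ otherwise. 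Since the $\mathbf{f}'_i$ and the $\mathbf{f}_i$ differ by torsion elements, they project to the same basis of $\cM/T$, so $L':=\langle\mathbf{f}'_1,\ldots,\mathbf{f}'_r\rangle$ is again a free part of $\cM$; moreover $a_i\mathbf{f}'_i=a_i\mathbf{f}_i+t_i\in H$ for $i\le h$, so $L'\supseteq H$, as required.

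The only step requiring a genuine idea is $(c)\Rightarrow(d)$: abstract purity supplies an $a_i$-th root only inside $\bar T\subseteq\cM/H$, and what converts this into a true divisibility statement upstairs in $T$ is precisely the triviality of $H\cap T$. Everything else is a bookkeeping exercise using the basis supplied by Proposition~\ref{prop:sopra}.
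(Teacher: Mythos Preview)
Your proof is correct and follows essentially the same route as the paper: the cycle $(a)\Rightarrow(b)\Rightarrow(c)\Rightarrow(d)\Rightarrow(a)$, with the key ingredient $H\cap T=0$ used in $(c)\Rightarrow(d)$, and the new free part in $(d)\Rightarrow(a)$ built by shifting the first $h$ basis vectors by the $u_i$'s. The only cosmetic difference is that the paper records the full equivalence $(b)\Leftrightarrow(c)$ (pure $=$ direct summand for finitely generated modules over a PID), whereas you use only the trivial direction; your cycle is complete without it.
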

\begin{proof} a) $\Rightarrow$ b): let $L$ be a free part of $\cM$ such that $H\subseteq L$. Then $\cM/H = (L\oplus T)/H \cong (L/H)\oplus T$.\\
The equivalence of $(b)$ and $(c)$ is a  the well-known fact that, for modules finitely generated over a PID, pure submodules and direct summands coincide (see for example \cite[Ex. B-3.7 (ii)]{Rotman}).\\
c) $\Rightarrow$ d): since $a_i\mathbf{f}_i+t_i\in H$, the image of $a_i\mathbf{f}_i$ in $\cM/H$ belongs to the image of $T$, for $i=1,\ldots, r$. By purity, there exists $u_i\in T$ such that the images of $a_i\mathbf{f}_i$ and $-a_iu_i$ coincide in $\cM/H$, that is $a_i\mathbf{f}_i +a_iu_i$ in $H$. But then $t_i-a_iu_i\in H\cap T=\{0\}$, because $H$ is free. \\
c) $\Rightarrow$ d): let $L'$ be the submodule of $\cM$ generated by $\mathbf{f}_1+u_1,\ldots,\mathbf{f}_h+u_h, \mathbf{f}_{h+1},\ldots, \mathbf{f}_r.$ Then $L'$ is a free part of $\cM$ containing $H$.

\end{proof}

Notice that since $H$ is free, $H\cap T=\{0\}$, so that the image of $T$ in $\cM/H$ is isomorphic to $T$.

For every prime element $p$ of $A$, we denote by $A_{(p)}$ the localization of $A$ at the prime ideal $(p)$. If $\cM$ is an $A$-module, $\cM_{(p)}$ is the localized $A_{(p)}$-module. \\
The localization of $T$ at $(p)$ coincide with the $p$-torsion of $T$, and $T=\bigoplus_p T_{(p)}$.\\
If $L$ is a free part of $\cM$ and $\cM=L\oplus T$ is the corresponding decomposition then $L_{(p)}$ is a free part of $\cM_{(p)}$, that is there is a decomposition $\cM_{(p)}=L_{(p)}\oplus T_{(p)}$. The natural map $\cM\to \cM_{(p)}$ is the sum of the injection $L\to L_{(p)}$ and the surjection $T\to T_{(p)}$.

\begin{proposition} \label{prop:localizza} $H$ is contained in a free part of $\cM$ if and only if
$H_{(p)}$ is contained in a free part of $\cM_{(p)}$  for every prime element $p\in A$.
\end{proposition}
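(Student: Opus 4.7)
The plan is to invoke the equivalence (a) $\iff$ (c) of Proposition~\ref{prop:equivalenze} and combine it with the well-known fact that, for finitely generated modules over a PID, purity of a submodule is equivalent to the quotient being torsion-free, and that torsion-freeness is a local property.

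The direction ($\Rightarrow$) is immediate from the observation made in the paragraph preceding the proposition: if $L$ is a free part of $\cM$ with $H\subseteq L$, then $L_{(p)}$ is a free part of $\cM_{(p)}$ and $H_{(p)}\subseteq L_{(p)}$, for every prime $p$.

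For the converse, assume $H_{(p)}$ is contained in a free part of $\cM_{(p)}$ for every prime $p$. Since $A_{(p)}$ is a DVR, hence a PID, Proposition~\ref{prop:equivalenze} applies to the $A_{(p)}$-module $\cM_{(p)}$: characterization (c) yields that $T_{(p)}$ is pure in $\cM_{(p)}/H_{(p)}$, equivalently that
$$\bigl(\cM_{(p)}/H_{(p)}\bigr)/T_{(p)}\;\cong\;\bigl((\cM/H)/T\bigr)_{(p)}$$
is torsion-free. Since this holds for every prime $p$, and any nonzero torsion element of an $A$-module remains nonzero and torsion in the localization at any prime dividing its annihilator, the module $(\cM/H)/T$ is itself torsion-free. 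Hence $T$ is pure in $\cM/H$, and Proposition~\ref{prop:equivalenze} applied now to $\cM$ gives that $H$ is contained in a free part of $\cM$.

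The main technical points to verify are the commutativity of localization with the quotient, $(\cM/H)_{(p)} = \cM_{(p)}/H_{(p)}$, and the identification of $T_{(p)}$ with the torsion submodule of $\cM_{(p)}$; both are recorded in the paragraph preceding the proposition and follow from the exactness of localization together with the decomposition $T=\bigoplus_p T_{(p)}$. No genuine obstacle is expected: the statement is essentially a transcription of the fact that purity is detected locally via the $p$-primary components of the torsion of the quotient.
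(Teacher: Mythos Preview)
Your forward direction matches the paper. The converse has a genuine gap at the word ``equivalently'': purity of a submodule $N\subseteq M$ is \emph{not} in general equivalent to $M/N$ being torsion-free---that equivalence holds only when $M$ itself is torsion-free. The implication you actually need (from purity of $T_{(p)}$ in $\cM_{(p)}/H_{(p)}$ to torsion-freeness of the quotient) is precisely the one that fails. Concretely, $(\cM/H)/T\cong(\cM/T)\big/\pi(H)\cong L/\pi(H)$, and by Proposition~\ref{prop:sopra} this is $\bigoplus_{i=1}^{h}A/(a_i)\ \oplus\ A^{\,r-h}$, which has torsion whenever some $a_i$ is a non-unit---quite independently of whether $H$ lies in a free part. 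For the smallest instance take $A=\Z$, $\cM=\Z$, $T=0$, $H=2\Z$: then $T_{(2)}=0$ is trivially pure in $\cM_{(2)}/H_{(2)}\cong\Z/2\Z$, yet the quotient is $\Z/2\Z$, not torsion-free.

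Your overall plan---pass to condition~(c) and check it prime by prime---is reasonable, but then you must argue directly that ``image of $T$ is a direct summand of $\cM/H$'' is a local property, not deduce it from a false torsion-freeness claim. The paper sidesteps this by using condition~(d) of Proposition~\ref{prop:equivalenze} instead: it fixes once and for all a free part $L$ with basis $\mathbf{f}_1,\dots,\mathbf{f}_r$ and the data $a_i,t_i$ from Proposition~\ref{prop:sopra}; the \emph{same} $a_i,\mathbf{f}_i$ serve in every localization, with $t_i$ replaced by its $p$-component $\tilde t_i\in T_{(p)}$. The hypothesis together with (d) over $A_{(p)}$ gives $a_i\mid\tilde t_i$ in $T_{(p)}$ for each $p$, and the decomposition $T=\bigoplus_p T_{(p)}$ then yields $a_i\mid t_i$ in $T$, so (d) over $A$ finishes.
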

\begin{proof} If $H$ is contained in a free part $L$ of $\cM$ then $L_{(p)}$ is a free part of $\cM_{(p)}$ containing $H_{(p)}$, every prime $p$ of $A$.  Conversely suppose that $H_{(p)}$ is contained in a free part of $\cM_{(p)}$  for every prime element $p\in A$. Let $L$ be a free part of $\cM$,  and $a_1,\ldots, a_h\in A$,  $\mathbf{f}_1,\ldots, \mathbf{f}_r\in L$, $t_1,\ldots, t_h\in T$ as in Proposition \ref{prop:sopra}.  Then $L_{(p)}$ is a free part of $\cM_{(p)}$ having basis $\mathbf{f}_1,\ldots, \mathbf{f}_r$  and  $a_1\mathbf{f}_1+\tilde t_1, \ldots, a_h\mathbf{f}_h+\tilde t_h$ is a basis of $H_{(p)}$ where $\tilde t_i\in T_{(p)}$ is the image of $t_i$ by the map $T\to T_{(p)}$.  Therefore $\tilde t_i$ is the component of $t_i$ in the $p$-torsion of $T$.  By Proposition \ref{prop:equivalenze} b), $\tilde t_i$ is divisible by $a_i$ in $ T_{(p)}$. Since this holds for every $p$, we obtain that $t_i$ is divisible by $a_i$ in $T$. Applying again Proposition \ref{prop:equivalenze} we see that $H$ is contained in a free part of $\cM$.\end{proof}

\section{Application to toric varieties}

As already mentioned in the Introduction, we put the following

\begin{definition}\label{def:pure}
Let $X$ be an irreducible and normal algebraic variety such that $\Cl(X)$ is finitely generated. Then $X$ is called \emph{pure} if there exist free parts $F_C$ and $F_W$ of $\Pic(X)$ and $\Cl(X)$, respectively, such that the canonical injection $\Pic(X)\hookrightarrow\Cl(X)$ descends to give the following commutative diagram
\begin{equation*}
  \xymatrix{\Pic(X)\ \ar@{^(->}@<-1pt>[r] &\ \Cl(X)\\
            F_C\ \ar@{^(->}@<-1pt>[r]\ar@{^(->}[u] &\ F_W\ar@{^(->}[u]}
\end{equation*}
In particular, if $X=X(\Si)$ is a $n$-dimensional toric variety whose $n$-skeleton $\Si(n)$ is not empty (see the following \S~\ref{ssez:notazioni} for notation) then $\Pic(X)$ is free (see e.g. \cite[Prop.~4.2.5]{CLS}), meaning that $X$ is pure if and only if $\Pic(X)$
is contained in a free part of $\Cl(X)$.\\
If $X$ is not pure, it  is called \emph{impure}.
\end{definition}

Of course, if $\Cl(X)$ is free then $X$ is pure; morover if $X$ is smooth and $\Cl(X)$ is finitely generated, then it is pure, beacause the injection $\Pic(X)\hookrightarrow \Cl(X)$ is an isomorphism.\\
 Conversely, producing examples of impure varieties is definitely more complicated (see Example~\ref{es:impuro}).

\subsection{Notation on toric varieties}\label{ssez:notazioni}

Let $X=X(\Si)$ be a $n$-dimensional toric variety associated with a fan $\Si$. Calling $\T\cong (k^*)^n$ the torus acting on $X$, we use the standard notation $M$, for the characters group of $\T$, and $N:=\Hom(M,\Z)$. Then $\Si$ is a collection of cones in $N_\R:=N\otimes\R\cong\R^n$. $\Si(i)$ denotes the $i$-skeleton of $\Si$, that is the collection of $i$-dimensional cones in the fan $\Si$. We shall use the notation $\tau\preceq \sigma$ to indicate that the  cone $\tau$ is a face of $\sigma$.

Given a toric variety $X(\Si)$ we will denote by $\Weil(X)\subseteq\Div(X)$ the subgroup of \emph{torus invariant} Weil divisors and by $\Cart(X)\subseteq\Weil(X)$ the subgroup of Cartier torus invariant divisors. It is well known that
\begin{equation*}
  \Weil(X)=\bigoplus_{\rho\in\Si(1)}\Z\cdot D_\rho\quad\text{where}\quad D_\rho:=\overline{\T\cdot x_\rho}
\end{equation*}
the latter being the closure of the torus orbit of the \emph{distinguished point} $x_\rho$ of the ray $\rho$ \cite[\S~3.2, \S~4.1]{CLS}. In particular the homomorphism $D\mapsto [D]$, sending a Weil divisor to its linear equivalence class, when restricted to torus invariant divisors still gives a epimorphism $d_X:\Weil(X)\twoheadrightarrow\Cl(X)$ \cite[Thm.~4.1.3]{CLS}.

In \cite[Def.~2.7]{RT-LA&GD} we introduced the notion of a \emph{poly weighted space} (PWS), which is a $\Q$-factorial complete toric variety $Y$ whose class group $\Cl(Y)$ is free. This is equivalent to say that $Y$ is connected in codimension 1 (1-connected), that is the regular locus $Y_\text{reg}$ of $Y$ is simply connected, as $Y$ is a normal variety: recall that $\pi_1(Y_\text{reg})\cong \Tors(\Cl(Y))=0$ \cite[Cor.~1.8, Thm.~2.1]{RT-QUOT}.
As proved in \cite[Thm.~2.2]{RT-QUOT}, every $\Q$-factorial complete  toric variety $X(\Si)$ is a finite quotient of a unique PWS $Y(\widehat{\Si})$, which is its universal \emph{covering unramified in codimension 1} ($1$-covering). The Galois group of the torus equivariant covering $Y\twoheadrightarrow X$ is precisely the dual group $\mu(X)=\Hom(\Tors(\Cl(X)),k^*)$. At lattice level, the equivariant surjection $Y\twoheadrightarrow X$ induces an injective automorphism $\beta: N\hookrightarrow N$ whose $\R$-linear extension $\beta_\R:N_\R\hookrightarrow N_\R$ identifies the associated fans, that is $\beta_\R(\widehat{\Si})=\Si$. Recall that one has the following commutative diagram (see diagram (5) in \cite{Erratum})
  \begin{equation}\label{div-diagram-covering}
    \begin{array}{c}
      \xymatrix{&&&0\ar[d]&\\
& 0 \ar[d] & 0 \ar[d] & \ker(\overline{\a})=T \ar[d] & \\
0 \ar[r] & M \ar[r]^-{div_X}_-{V^T}\ar[d]_-{\b^T} &
\Weil (X)=\Z^{|\Si(1)|} \ar[r]^-{d_X}_-{Q\oplus \Gamma}\ar[d]^-{\a}_-{\mathbf{I}_{n+r}} & \Cl(X)\cong \Z^r\oplus T \ar[r]\ar[d]^-{\overline{\a}} & 0 \\
0 \ar[r] & M \ar[r]^-{div_Y}_-{\widehat{V}^T}\ar[d]&\Weil(Y)=\Z^{|\widehat{\Si}(1)|}\ar[r]^-{d_Y}_-{Q}\ar[d] & \Cl (Y)\cong \Z^r \ar[r]\ar[d] & 0 \\
 & \coker(\b^T)\cong T\ar[d] & 0 & 0 & \\
 &0&&&}
    \end{array}
\end{equation}
where
\begin{itemize}
  \item $T=\Tors(\Cl(X))$\,;
  \item $div_X,div_Y$ are the morphisms sending a character in $M$ to the associated principal divisor in $\Weil(X),\Weil(Y)$, respectively;
  \item $d_X,d_Y$ are the morphisms sending a torus invariant divisor in $\Weil(X),\Weil(Y)$, respectively, to its class in $\Cl(X),\Cl(Y)$, respectively;
  \item $\a$ is the identification $\Weil(X)\cong\Weil(Y)$ induced by inverse images of rays by $\b_\R$, that is
      $$\a\left(\sum_{\rho\in\Si(1)} a_\rho D_{\rho}\right)= \sum_{\b_\R^{-1}(\rho)\in\widehat{\Si}(1)} a_\rho D_{\b_\R^{-1}(\rho)}\,;$$
  \item $\overline{\a}$ is what induced by $\a$ on classes groups;
  \item $V,\widehat{V}$ are matrices whose transposed represent $div_X,div_Y$, respectively, w.r.t. a chosen a basis of $M$ and standard bases of torus orbits of rays of $\Weil(X)$ and $\Weil(Y)$, respectively; since $|\Si(1)|=|\widehat{\Si}(1)|=n+r$, where $$r=\rk(\Cl(Y))=\rk(\Cl(X))$$
      both $V$ and $\widehat{V}$ are $n\times(n+r)$ integer matrices called \emph{fan matrices} of $X$ and $Y$, respectively; they turns out to be \emph{$F$-matrices}, in the sense of \cite[Def.~3.10]{RT-LA&GD}, and $\widehat{V}$ is also a \emph{$CF$-matrix}; notice that, still calling $\b$ the representative matrix of the homonymous morphism $\b:N\hookrightarrow N$ w.r.t. the basis dual to that chosen in $M$, there is the relation
      \begin{equation*}
        V=\b\cdot\widehat{V}
      \end{equation*}
      (see \cite[Prop.~3.1\,(3)]{RT-LA&GD} and \cite[Rem.~2.4]{RT-QUOT}); concretely, both $V$ and $\widehat{V}$ can be obtained as matrices whose columns represent primitive generators of rays in $\Si(1)$ and $\widehat{\Si}(1)$, respectively, w.r.t. the dual basis i.e.
      \begin{equation*}
        V=\left(
            \begin{array}{ccc}
              \v_1 & \cdots & \v_{n+r} \\
            \end{array}
          \right)\ ,\quad
          \widehat{V}=\left(
            \begin{array}{ccc}
              \widehat{\v}_1 & \cdots & \widehat{\v}_{n+r} \\
            \end{array}
          \right)
      \end{equation*}
      where $\Si(1)=\{\langle\v_1\rangle, \ldots ,\langle\v_{n+r}\rangle\}$, $\widehat{\Si}(1)=\{\langle\widehat{\v}_1\rangle, \ldots ,
      \langle\widehat{\v}_{n+r}\rangle\}$, being $\langle \v\rangle$ the ray generated by $\v$ in $\R^n\cong N_\R$\,;
  \item $Q$ is a matrix representing $d_Y$ w.r.t. a chosen basis of $\Cl(Y)$; it is a $r\times(n+r)$ integer matrix which turns out to be a \emph{Gale dual matrix} of both $V$ and $\widehat{V}$, in the sense of \cite[\S~3.1]{RT-LA&GD} and a \emph{$W$-matrix}, in the sense of \cite[Def.~3.9]{RT-LA&GD}; it is called a \emph{weight matrix} of both $X$ and $Y$;
  \item the choice of a basis of $\Cl(Y)$ as above, determines a basis of a free part of $\Cl(X)$; complete such a basis with a set of generators of the torsion subgroup $T\subseteq\Cl(X)$; then $d_X$ decomposes as $d_X=f_X\oplus\tau_X$ where
      \begin{equation*}
        \xymatrix{&\Z^r\\
        \Weil(X)\ar@{->>}[r]^-{d_X}\ar@{->>}[ur]^-{f_X}\ar@{->>}[dr]_-{\tau_X}&\Cl(X)\cong\Z^r\oplus T
        \ar@{->>}[u]^-{\pi_1}\ar@{->>}[d]_-{\pi_2}\hskip1.4cm ;\\
        &T}
      \end{equation*}
      with respect to these choices, the weight matrix $Q$ turns out to be a representative matrix of $f_X$, too, while morphism $\tau_X$ is represented by a \emph{torsion matrix} $\Ga$ \cite[Thm.~3.2\,(6)]{Erratum}.
\end{itemize}

\subsubsection{Some further notation}\label{sssez:notazioni}
Let $A\in\mathbf{M}(d,m;\Z)$ be a $d\times m$ integer matrix, then
\begin{eqnarray*}
% \nonumber to remove numbering (before each equation)
  &\mathcal{L}_r(A)\subseteq\Z^m& \text{denotes the sublattice spanned by the rows of $A$;} \\
  &\mathcal{L}_c(A)\subseteq\Z^d& \text{denotes the sublattice spanned by the columns of $A$;} \\
  &A_I\,,\,A^I& \text{$\forall\,I\subseteq\{1,\ldots,m\}$ the former is the submatrix of $A$ given by}\\
  && \text{the columns indexed by $I$ and the latter is the submatrix of}\\
  && \text{$A$ whose columns are indexed by the complementary }\\
  && \text{subset $\{1,\ldots,m\}\backslash I$;}
\end{eqnarray*}
Given a fan matrix $V=(\v_1,\ldots,\v_{n+r})\in\mathbf{M}(n,n+r;\Z)$ then
\begin{eqnarray*}
% \nonumber to remove numbering (before each equation)
  &\langle V\rangle=\langle\v_1,\ldots,\v_{n+r}\rangle\subseteq N_{\R}& \text{denotes the cone generated by the columns of $V$;} \\
  &\SF(V)=\SF(\v_1,\ldots,\v_{n+r})& \text{is the set of all rational simplicial and complete}\\
  && \text{ fans $\Si$ such that $\Sigma(1)=\{\langle\v_1\rangle,\ldots,\langle\v_{n+r}\rangle\}\subset N_{\R}$}\\  && \text{(see \cite[Def.~1.3]{RT-LA&GD}).}
\end{eqnarray*}
Given a  fan $\Si\in \SF(V) $ we put
\begin{eqnarray*}\label{ISigma}
    \mathcal{I}^{\Si}&=&\{I\subseteq\{1,\ldots,n+r\}:\left\langle V^I\right\rangle\in\Si(n)\}.
   \end{eqnarray*}

\subsection{A sufficient condition}

This section is aimed to give a sufficient condition for a $\Q$-factorial complete toric variety to be a pure variety. Let us, first of all, outline some equivalent facts.

\begin{proposition}\label{prop:localizza2}
Let $X(\Si)$ be a $\Q$-factorial complete toric variety, $Y(\widehat{\Si})\twoheadrightarrow X(\Si)$ be its universal 1-covering, $V$ and $\widehat{V}$ be fan matrices of $X$ and $Y$, respectively. Assuming notation as in diagram (\ref{div-diagram-covering}), the following are equivalent:
\begin{itemize}
\item[a)] $X$ is a pure variety;
\item[b)] there is a decomposition $\Weil(X)=\mathcal{L}_r(\widehat V)\oplus F$ such that $\Cart(X)\subseteq \mathcal{L}_r(V)\oplus F$;
\item[c)] for every prime $p$ there exists a $\Z_{(p)}$-module $F_p$ and a decomposition  $$\Weil(X)_{(p)}=\mathcal{L}_r(\widehat{V})_{(p)}\oplus F_p$$
    such that $\Cart(X)_{(p)}\subseteq \mathcal{L}_r(V)_{(p)}\oplus F_p$.
\end{itemize}
\end{proposition}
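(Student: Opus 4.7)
The plan is to reduce the three-way equivalence to the module-theoretic results of Section \ref{sez:algebra}, specifically Propositions \ref{prop:equivalenze} and \ref{prop:localizza}, applied with $A=\Z$, $\cM=\Cl(X)$, $T=\Tors(\Cl(X))$, and $H=\Pic(X)$ embedded via the canonical injection (\ref{iniezione}). With these choices, condition (a) is verbatim Proposition \ref{prop:equivalenze}(a), so the real work is to dictionary the existence of a free part of $\Cl(X)$ containing $\Pic(X)$ into the fan-matrix language of (b), and to verify that the dictionary commutes with localization at every prime, yielding (c).

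First I will show (a) $\Leftrightarrow$ (b). Reading off diagram (\ref{div-diagram-covering}) one has
\[
\Cl(X)=\Weil(X)/\mathcal{L}_r(V),\qquad T=\mathcal{L}_r(\widehat V)/\mathcal{L}_r(V),\qquad \Pic(X)=\Cart(X)/\mathcal{L}_r(V),
\]
where the inclusion $\mathcal{L}_r(V)\subseteq \mathcal{L}_r(\widehat V)$ comes from $V=\beta\widehat V$. The key lemma to prove is a natural bijection between free parts $F_W$ of $\Cl(X)$ and complements $F$ of $\mathcal{L}_r(\widehat V)$ in $\Weil(X)$, given by $F_W=(\mathcal{L}_r(V)\oplus F)/\mathcal{L}_r(V)$. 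In one direction, $\mathcal{L}_r(V)\cap F\subseteq \mathcal{L}_r(\widehat V)\cap F=0$ makes the sum direct, and checking $F_W\cap T=0$ and $F_W+T=\Cl(X)$ is routine. In the reverse direction, one splits the short exact sequence $0\to\mathcal{L}_r(V)\to \pi^{-1}(F_W)\to F_W\to 0$ (possible since $F_W$ is free, with $\pi\colon\Weil(X)\twoheadrightarrow\Cl(X)$) and verifies that the resulting section $F$ meets $\mathcal{L}_r(\widehat V)$ only at $0$ and surjects on $\Cl(Y)=\Weil(X)/\mathcal{L}_r(\widehat V)$. Under this bijection, the inclusion $\Pic(X)\subseteq F_W$ becomes, after taking preimages in $\Weil(X)$, exactly $\Cart(X)\subseteq\mathcal{L}_r(V)\oplus F$, which is (b).

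For (b) $\Leftrightarrow$ (c) I invoke Proposition \ref{prop:localizza}: (a) holds iff for every prime $p$ the subgroup $\Pic(X)_{(p)}$ is contained in a free part of $\Cl(X)_{(p)}$. Localization at $(p)$ is exact and commutes with the formation of $\mathcal{L}_r(V)$, $\mathcal{L}_r(\widehat V)$, $\Cart(X)$ and with quotients, and preserves the freeness of $\Cl(Y)\cong\Weil(X)/\mathcal{L}_r(\widehat V)$; hence the bijection established in Step~1 carries over verbatim to the localized setting and converts the local version of (a) into the decomposition statement (c). Combined with Step~1 this closes the cycle.

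The main technical hurdle is the bijection between free parts of $\Cl(X)$ and complements of $\mathcal{L}_r(\widehat V)$ in $\Weil(X)$: a free part $F_W$ does not come with a canonical lift to $\Weil(X)$, and producing one compatible with the chain $\mathcal{L}_r(V)\subseteq\mathcal{L}_r(\widehat V)\subseteq\Weil(X)$ uses in an essential way the freeness of $\Cl(Y)$, i.e.\ the PWS structure of the $1$-covering $Y\twoheadrightarrow X$. Once this bijection is established, both the global equivalence (a)$\Leftrightarrow$(b) and its localized form needed for (c) become essentially formal.
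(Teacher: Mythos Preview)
Your argument is essentially correct and close in spirit to the paper's, but one claim is overstated and worth fixing. You assert a \emph{bijection} between complements $F$ of $\mathcal{L}_r(\widehat V)$ in $\Weil(X)$ and free parts $F_W$ of $\Cl(X)$ via $F_W=(\mathcal{L}_r(V)\oplus F)/\mathcal{L}_r(V)$. This map is not injective: distinct complements $F$ and $F'$ can yield the same $F_W$ (any two sections of $\pi^{-1}(F_W)\twoheadrightarrow F_W$ give different $F$'s with the same image). What your proof actually uses, and what you correctly construct, is a pair of maps in both directions that respect the inclusions $\Pic(X)\subseteq F_W$ and $\Cart(X)\subseteq\mathcal{L}_r(V)\oplus F$; no bijectivity is needed. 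With that correction the argument goes through.

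Structurally, the paper argues cyclically $a)\Rightarrow b)\Rightarrow c)\Rightarrow a)$: the step $b)\Rightarrow c)$ is just localization, and for $c)\Rightarrow a)$ it computes directly that $\Cl(X)_{(p)}/\Pic(X)_{(p)}\cong T_{(p)}\oplus(F_p/F'_p)$ with $F'_p=F_p\cap\Cart(X)_{(p)}$, thus invoking the direct-summand criterion of Proposition~\ref{prop:equivalenze}(b) before globalizing via Proposition~\ref{prop:localizza}. You instead establish the dictionary $a)\Leftrightarrow b)$ once and then re-run it over $\Z_{(p)}$ to get $(\text{local }a)\Leftrightarrow c)$, appealing to Proposition~\ref{prop:localizza} for $a)\Leftrightarrow(\text{local }a)$. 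Both routes rely on the same two propositions from \S\ref{sez:algebra}; yours is slightly more uniform, the paper's slightly more hands-on.
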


\begin{proof} a) $\Rightarrow$ b): if $X$ is a pure variety, let $\Cl(X)=L\oplus T$ be a decomposition such that $L$ is a free part and $\Pic(X)\subseteq L$. We can identify $L$ with $\Z^r$ in the first row of diagram (\ref{div-diagram-covering}). Let $s:L\to  \Weil(X)$ be any section (i.e. $Q\circ s=id_L$ ) and put $F=s(L)$. If $x\in \Weil(X)$, write $d_X(x)=a+b$, with $a\in L$ and $b\in T$. Then $Q(x-s(a))=0$ so that $x-s(a)\in\mathcal{L}_r(\widehat V)$; this proves that $\Weil(X)=\mathcal{L}_r(\widehat V)\oplus F$. If $x\in \Cart(X)$ then write $x=a+b$ with $a\in\mathcal{L}_r(\widehat V)$ and $b\in F$; since $d_X(x)\in L$, we have $\Ga\cdot x=\Ga\cdot a=0$, so that $a\in\mathcal{L}_r(V)$.\\
b) $\Rightarrow$ c) is obvious.\\
c) $\Rightarrow $ a): let $p$ be a prime and put $F'_p=F_p\cap \Cart(X)_{(p)}$. We have $$\Cart(X)_{(p)}=\mathcal{L}_r(V)_{(p)}\oplus F'_p$$
so that
\begin{eqnarray*}
% \nonumber to remove numbering (before each equation)
  \Cl(X)_{(p)}/\Pic(X)_{(p)}=\Weil(X)_{(p)}/\Cart(X)_{(p)} &\cong& \left(\mathcal{L}_r(\widehat V)_{(p)}/\mathcal{L}_r(V)_{(p)}\right)\oplus \left(F_p/F'_p\right) \\
   &\cong& T_{(p)}\oplus \left(F_p/F'_p\right)
\end{eqnarray*}
Then we see that the image of $T_{(p)}$ is a direct summand in $\Cl(X)_{(p)}/\Pic(X)_{(p)}$, so that $\Pic(X)_{(p)}$ is contained in a free part of $\Cl(X)_{(p)}$ by Proposition \ref{prop:equivalenze} b). Since this holds for every $p$ we can apply Proposition \ref{prop:localizza} and deduce that $\Pic(X)$ is contained in a free part of $\Cl(X)$, so that $X$ is pure.
\end{proof}

\begin{definition}\label{def:molteplicità}
Let $\Sigma$ be a fan in $\R^n$. For every simplicial cone $\sigma\in \Sigma$, let $\mathbf{w}_1,\ldots, \mathbf{w}_k\in \Z^n$ be the set of minimal generators of $\sigma$. Let $\mathcal{V}$ be the subspace of $\R^n$ generated by $\sigma$, and $L=\mathcal{V}\cap \Z^n$.  The \emph{multiplicity} of $\sigma$ is the index
$$\mult(\sigma)=[L: \Z\mathbf{w}_1\oplus\ldots\oplus  \Z\mathbf{w}_k].$$
If $\Sigma$ is a simplicial fan we put
$$m_\Sigma=\gcd\{\mult(\sigma)\ |\ \sigma \hbox{ is a maximal cone in } \Sigma \}.$$
\end{definition}

Set, once for all, the following notation:
\begin{equation}\label{EI}
\forall\,I\subseteq\{1,\ldots,n+r\}\quad
E_I:=\{\x=(x_1,\ldots, x_{n+r})\in\Z^{n+r}\ |\ x_i=0, \forall i\not\in I\}\,.
\end{equation}

We are now in a position to state and prove the main result of the present paper.

\begin{theorem}
\label{teo:gcdmult1}
Let $X=X(\Sigma)$ be a complete $\Q$-factorial toric variety and $Y=Y(\widehat{\Sigma})$ be its universal 1-covering; let $\widehat{V}$ be a fan matrix associated to $Y$, and $V=\beta\cdot\widehat{V}$ be a fan matrix associated to $X$. Suppose that $(\det(\beta), m_{\widehat{\Sigma}})=1$. Then $X$ is a pure variety.
\end{theorem}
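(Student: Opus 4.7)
The plan is to apply the local criterion of Proposition \ref{prop:localizza2}(c), working one rational prime $p$ at a time, and to split into two cases according to whether $p$ divides $\det(\beta)$.

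When $p \nmid \det(\beta)$, the integer matrix $\beta$ becomes invertible in $\Z_{(p)}$, so $\mathcal{L}_r(V)_{(p)} = \mathcal{L}_r(\widehat V)_{(p)}$ and $T_{(p)} = 0$. Since $\Cl(Y)\cong\Z^r$ is free, the localization of the middle row of diagram (\ref{div-diagram-covering}) yields a short exact sequence $0\to \mathcal{L}_r(\widehat V)_{(p)} \to \Weil(X)_{(p)} \to \Cl(Y)_{(p)} \to 0$ which splits. Any complement $F_p$ of $\mathcal{L}_r(\widehat V)_{(p)}$ in $\Weil(X)_{(p)}$ then satisfies $\Cart(X)_{(p)} \subseteq \Weil(X)_{(p)} = \mathcal{L}_r(V)_{(p)} \oplus F_p$ trivially.

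The substance of the proof is the case $p \mid \det(\beta)$. The coprimality hypothesis forces $p \nmid m_{\widehat\Sigma}$, hence there exists a maximal cone $\widehat\sigma \in \widehat\Sigma(n)$ whose multiplicity is coprime to $p$. Let $I = I_{\widehat\sigma}$; since $\widehat\sigma$ is a maximal simplicial cone of full dimension, one has $|\det(\widehat V_I)| = \mult(\widehat\sigma)$, a unit in $\Z_{(p)}$. I would set $F_p := (E^I)_{(p)}$, with $E^I$ as in (\ref{EI}). The projection $\pi_I:\Weil(X)_{(p)} = (E_I)_{(p)} \oplus (E^I)_{(p)} \twoheadrightarrow (E_I)_{(p)}$ restricted to $\mathcal{L}_r(\widehat V)_{(p)} = \widehat V^T(\Z_{(p)}^n)$ is induced by $\widehat V_I^T \in \GL_n(\Z_{(p)})$, hence an isomorphism; this already forces the direct sum decomposition $\Weil(X)_{(p)} = \mathcal{L}_r(\widehat V)_{(p)} \oplus (E^I)_{(p)}$.

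For the Cartier inclusion, put $\sigma := \beta_\R(\widehat\sigma) \in \Sigma(n)$, a maximal cone of $\Sigma$ with the same index set $I$ and with $V_I = \beta\,\widehat V_I$. Given $\x \in \Cart(X)$, the Cartier condition at $\sigma$ produces $m\in M$ with $V_I^T m = \x_I$, and writing $\x = V^T m + (\x - V^T m)$ exhibits $\x \in \mathcal{L}_r(V) + E^I$; after localization this gives the required inclusion $\Cart(X)_{(p)} \subseteq \mathcal{L}_r(V)_{(p)} \oplus (E^I)_{(p)}$. Proposition \ref{prop:localizza2}(c) then concludes that $X$ is pure. The main conceptual point — and the only real obstacle — is recognizing that the complement $F_p$ must depend on $p$ through the choice of a cone $\widehat\sigma$ whose multiplicity is coprime to $p$; this is precisely what forces recourse to the local criterion of Proposition \ref{prop:localizza} instead of exhibiting a single global free complement.
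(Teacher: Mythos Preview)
Your proof is correct and follows essentially the same line as the paper's: reduce to the local criterion of Proposition~\ref{prop:localizza2}(c), and for each prime $p\mid\det(\beta)$ pick a maximal cone of $\widehat\Sigma$ whose multiplicity is prime to $p$ and take the corresponding coordinate subspace as $F_p$. The only differences are cosmetic: the paper indexes by the complementary $r$-element subset $I\in\mathcal{I}^{\Sigma}$ and writes $F_p=(E_I)_{(p)}$ (your symbol $E^I$ is not defined in the paper, though its meaning is clear from the $A_I/A^I$ convention), and it checks the splitting $\Weil(X)_{(p)}=\mathcal{L}_r(\widehat V)_{(p)}\oplus F_p$ via the invertibility of $Q_I$ in $\Z_{(p)}$ rather than of $\widehat V_I^{\,T}$, which is equivalent by Gale duality.
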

\begin{proof}
 By Proposition \ref{prop:localizza2}, it suffices to show that for every prime $p$ there exists a $\Z_{(p)}$-module $F_p$ and a decomposition  $\Weil(X)_{(p)}=\mathcal{L}_r(\widehat{V})_{(p)}\oplus F_p$ such that $\Cart(X)_{(p)}\subseteq \mathcal{L}_r(V)_{(p)}\oplus F_p$. If $p\not | \det(\beta)$ then $\mathcal{L}_r(V)_{(p)}=\mathcal{L}_r(\widehat{V})_{(p)}$ and we are done. Assume that $p|\det(\beta)$;  by hypothesis there exists a maximal cone $\widehat{\sigma}=\widehat{\sigma}^I\in \widehat{\Sigma}$ such that $p\not\hskip-.045cm |\mult(\widehat{\sigma})=\det(Q_I)$. Put
$F_p=E_{I,{(p)}}$, where $E_I$ is defined in (\ref{EI}). By definition $\Cart(X)\subseteq \mathcal{L}_r(V)\oplus E_I$, so that $\Cart(X)_{(p)}\subseteq \mathcal{L}_r(V)_{(p)}\oplus F_p$. We claim that $\Z_{(p)}^{n+r}=\mathcal{L}_r(\widehat{V})_{(p)}\oplus F_p$. The inclusion $\supseteq$ being obvious, assume that $\x\in\Z_{(p)}^{n+r}$. Since $\det(Q_I)$ is invertible in $\Z_{(p)}$, there esists $\mathbf{y}\in E_I$ such that $Q\x=Q\mathbf{y}$, that is $\x-\mathbf{y}\in\ker(Q)=\mathcal{L}_r(\widehat{V})$.
\end{proof}

\begin{corollary}\label{cor:gcdmult}
Let $Y=Y(\widehat{\Sigma})$ be a poly weighted projective space such that $m_{\widehat{\Sigma}}=1.$  Then every $\Q$-factorial complete toric variety having $Y$ as universal $1$-covering is pure.
\end{corollary}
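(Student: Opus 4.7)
The plan is to reduce this directly to Theorem \ref{teo:gcdmult1}, which has already been established. The corollary is phrased as a universal statement over all $\Q$-factorial complete toric varieties $X$ whose universal $1$-covering is the fixed PWS $Y(\widehat{\Sigma})$, so I would begin by fixing such an $X=X(\Sigma)$ and unpacking what this means combinatorially. By the setup recalled in \S\ref{ssez:notazioni}, the covering $Y\twoheadrightarrow X$ is torus-equivariant and is encoded at the lattice level by an injection $\beta:N\hookrightarrow N$ such that $\beta_\R(\widehat{\Sigma})=\Sigma$; in particular, choosing fan matrices we have $V=\beta\cdot\widehat{V}$, with $\beta$ an $n\times n$ integer matrix of non-zero determinant.

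The key observation is purely numerical: the hypothesis $m_{\widehat{\Sigma}}=1$ forces
\[
\gcd(\det(\beta),\, m_{\widehat{\Sigma}})\;=\;\gcd(\det(\beta),1)\;=\;1,
\]
regardless of what $\beta$ (and hence $X$) actually is. This is precisely the coprimality hypothesis of Theorem \ref{teo:gcdmult1}.

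Consequently, the plan is simply to invoke Theorem \ref{teo:gcdmult1} to conclude that $X$ is pure. Since the choice of $X$ was arbitrary among $\Q$-factorial complete toric varieties sharing $Y$ as universal $1$-covering, this proves the corollary. There is no genuine obstacle here: all the work has been absorbed into Theorem \ref{teo:gcdmult1}, and the corollary is essentially the ``$\beta$-free'' specialization of that theorem in which the coprimality condition becomes automatic.
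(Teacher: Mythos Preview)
Your proposal is correct and matches the paper's approach exactly: the corollary is stated immediately after Theorem~\ref{teo:gcdmult1} with no separate proof, precisely because $m_{\widehat{\Sigma}}=1$ makes the coprimality hypothesis $(\det(\beta),m_{\widehat{\Sigma}})=1$ automatic for every $\beta$.
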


\begin{remark}\label{rem:geom}
  Geometrically the previous Theorem~\ref{teo:gcdmult1} translates precisely in Theorem~\ref{thm:intro} stated in the introduction. In fact a $\Q$-factorial complete toric variety is an orbifold (see \cite[Thm.~3.1.19\,(b)]{CLS}) whose $n$-skeleton parameterizes a covering by affine charts. In particular $Y$ has only finite quotient singularities whose order is necessarily a divisor of some multiplicity $\mult(\widehat{\s})$, for $\widehat{\s}\in\widehat{\Si}(n)$. Moreover, the affine chart $U_{\widehat{\s}}$ has always a quotient singularity of maximum order $\mult(\widehat{\s})$. Hence Theorem~\ref{thm:intro} follows.\\
  In particular the previous Corollary \ref{cor:gcdmult} gives the following
\end{remark}

\begin{corollary}
  Let $Y$ be a $n$-dimensional, $\Q$-factorial, complete toric variety admitting a torus invariant, Zarisky open subset $U\subseteq Y$, biregular to $\C^n$. Then $Y$ is a PWS and every $\Q$-factorial complete toric variety having $Y$ as universal 1-covering is pure.
\end{corollary}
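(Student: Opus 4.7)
The plan is to verify the two hypotheses of Corollary \ref{cor:gcdmult} for $Y$: namely that $Y$ is a PWS and that $m_{\widehat{\Si}}=1$, where $\widehat{\Si}$ denotes the fan of $Y$. The key input is the existence of the torus invariant affine open subset $U\cong\C^n$.

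First I would observe that, by the orbit-cone correspondence, a torus invariant affine open subset of $Y$ must coincide with the affine chart $U_{\widehat{\s}}$ associated with some cone $\widehat{\s}\in\widehat{\Si}$. The hypothesis $\dim U=n$ forces $\widehat{\s}\in\widehat{\Si}(n)$, and the biregularity $U_{\widehat{\s}}\cong\C^n$ forces the primitive generators of $\widehat{\s}$ to form a $\Z$-basis of $N$. In particular $\mult(\widehat{\s})=1$, so that $m_{\widehat{\Si}}=\gcd\{\mult(\widehat{\s}')\,|\,\widehat{\s}'\in\widehat{\Si}(n)\}=1$.

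Next I would prove that $Y$ itself is a PWS, i.e.\ that $\Cl(Y)$ is free. After reordering the rays and choosing the basis of $N$ given by the primitive generators of $\widehat{\s}$ (with the dual basis of $M$), a fan matrix of $Y$ takes the block form $\widehat{V}=(I_n\,|\,V')$ for a suitable $V'\in\mathbf{M}(n,r;\Z)$. Then the transpose $\widehat{V}^T:M\to\Weil(Y)\cong\Z^{n+r}$ has $I_n$ as its first $n$ rows, so projecting onto the last $r$ coordinates induces an isomorphism
\begin{equation*}
\Cl(Y)=\Z^{n+r}/\mathcal{L}_r(\widehat{V})\ \stackrel{\sim}{\longrightarrow}\ \Z^r,
\end{equation*}
and consequently $\Tors(\Cl(Y))=0$. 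Thus $Y$ is a PWS, which is moreover its own universal 1-covering, and the hypotheses of Corollary \ref{cor:gcdmult} are satisfied. The conclusion follows.

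I do not foresee any substantial obstacle: the entire argument reduces to identifying $U$ with an affine toric chart (a standard application of the orbit-cone correspondence for torus invariant open subsets) and to the block decomposition of the fan matrix, which makes both the multiplicity computation and the freeness of $\Cl(Y)$ immediate.
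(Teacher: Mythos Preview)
Your proposal is correct and follows exactly the route the paper intends: the corollary is presented there as an immediate consequence of Corollary~\ref{cor:gcdmult}, and you have simply spelled out the standard verification of its two hypotheses. One small slip worth fixing: the sentence ``$\dim U=n$ forces $\widehat{\s}\in\widehat{\Si}(n)$'' is not right, since every affine chart $U_{\widehat{\s}}$ is $n$-dimensional regardless of $\dim\widehat{\s}$; what forces $\widehat{\s}$ to be full-dimensional is the biregularity $U_{\widehat{\s}}\cong\C^n$ itself (a cone of dimension $k<n$ would give $U_{\widehat{\s}}$ a nontrivial $(\C^*)^{n-k}$ factor), and that same biregularity then yields smoothness of $\widehat{\s}$ and hence $\mult(\widehat{\s})=1$.
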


\subsection{Examples}\label{ssez:esempi}
The present section is devoted to give some examples of pure and impure $\Q$-factorial complete toric varieties.

\begin{example}\label{es:impuro}
Consider the fan matrix
\begin{equation*}\widehat{V}=\begin{pmatrix}  1 &-1& 2& -3& -1\\ 1& -1& -1& 2& -1\\ 1& 1& 1& 1& -5\end{pmatrix}\end{equation*}

The corresponding weight matrix is
$$Q=\begin{pmatrix} 3&1& 10& 6& 4\\ 3& 2& 0& 0& 1\end{pmatrix}.$$
One can check that $|\SF(\widehat{V})|=2$. These two fans are given by taking all the faces of the following lists of maximal cones:
\begin{eqnarray*}
\widehat{\Sigma}_1&=&\{\langle 1, 2, 3\rangle ,\langle 1, 2, 4\rangle, \langle 2, 4, 5\rangle , \langle 1, 4, 5\rangle  , \langle 2, 3, 5\rangle , \langle 1, 3, 5\rangle  \}\\
\widehat{\Sigma}_2&=&\{ \langle 1, 3, 4\rangle , \langle 2, 3, 4\rangle ,\langle 2, 4, 5\rangle , \langle 1, 4, 5\rangle , \langle 2, 3, 5\rangle , \langle 1, 3, 5\rangle \}
\end{eqnarray*}
(We denote by $\langle i,j,k\rangle $ the cone generated by the columns $\v_i,\v_j,\v_k$ of the matrix $\widehat{V}$).
The list of multiplicities of maximal cones for the two fans are, respectively,
$$6,10 ,30, 20,18,12 \hbox{ and } 7,9, 30, 20,18,12,$$
so that $$m_{\widehat{\Sigma}_1}=2,\quad\quad m_{\widehat{\Sigma}_2}=1.$$
Define
$$\beta:=\begin{pmatrix}  1 &0& 0\\ 0& 1& 0 \\ 0& 0& 2\end{pmatrix} $$

and
$$V:=\beta\cdot\widehat{V}=\begin{pmatrix}  1 &-1& 2& -3& -1\\ 1& -1& -1& 2& -1\\ 2& 2& 2& 2& -10\end{pmatrix}.$$
A torsion matrix $\Gamma$ with entries in $\Z/2\Z$ such that $Q\oplus\Gamma$ represents the morphism assigning to each divisor its class, as in the previous diagram (\ref{div-diagram-covering}), is given by
$$\Gamma=\left(
           \begin{array}{ccccc}
             0 & 1 & 1 & 1 & 0 \\
           \end{array}
         \right)\,.$$
Let $\Sigma_1$ be the fan in $\SF(V)$ corresponding to $\widehat{\Sigma}_1$. We show that $X(\Sigma_1)$ is an impure variety.
Using methods explained in \cite[Thm.~3.2\,(2)]{Erratum}, we obtain that a basis of $\Cart(X)$ is given by the rows of the following matrix
$$C_X=\begin{pmatrix} 40 &0 &0&0&0\\
0 &60&0&0&0\\
0&0&3&0&0\\
-24 &-24&0&1&0\\
-9 &-47&-2&0&1
\end{pmatrix}.$$
Then
$$
Q\cdot C_X^T =\begin{pmatrix}
120 &60&30&-90&-90\\
120 &120&0&-120&-120
\end{pmatrix},\quad\quad
\Gamma\cdot C_X^T = \begin{pmatrix}
0&0&1&1&1
\end{pmatrix}
$$
Then we see that $\Pic(X)$ is generated in $\Cl(X)\cong  \Z^2\oplus\Z/2\Z$  by elements
$$ (120,120), (60,120), (30,0)+[1]_2,(90,120)+[1]_2\,.$$
 the first and the last of them are obviously generated by the remaining two elements, so that  $\Pic(X)$ is generated by $(60,120)$ and $(30,0)+[1]_2$. Every  free part of $\Cl(X)$  contains an element $z $ of the form $(15,0)+[a]_2$ for some $a\in\{0,1\}$;  therefore it must contain $2z=(30,0)$; then $ (30,0)+[1]_2$ cannot belong to any free part, meaning that $X(\Si_1)$ is impure.

Notice that purity is a property depending on the fan choice. In fact $\Si_2$ satisfies hypothesis of Theorem~\ref{teo:gcdmult1}, as $m_{\Si_2}=1$. Then $X(\Si_2)$ is pure.
\end{example}

The following is a counterexample showing that a converse of
 Theorem \ref{teo:gcdmult1} cannot hold.

\begin{example} \label{es:puro}
Let $\widehat{V}$ be the fan matrix of Example \ref{es:impuro}.
Consider the matrix
 $$\beta'=\begin{pmatrix}  1 &0& 0\\ 0& 2& 0 \\ 0& 0& 1\end{pmatrix} $$

and put
$$V':=\beta'\cdot\widehat{V}=\begin{pmatrix}  1 &-1& 2& -3& -1\\ 2& -2& -2& 4& -2\\ 1& 1& 1& 1& -5\end{pmatrix}.$$
A torsion matrix $\Gamma'$ with entries in $\Z/2\Z$ such that $Q\oplus\Gamma'$ represents the morphism assigning to each divisor its class is given by
\begin{equation}
\label{eq:Gamma}
\Gamma'=\left(
           \begin{array}{ccccc}
             0 & 0 & 0 & 1 & 1 \\
           \end{array}
         \right)
\,.\end{equation}
Let $\Sigma'_1$ be the fan in $\SF(V)$ corresponding to $\widehat{\Sigma}_1$ and $X'=X(\Sigma'_1)$. In this case $X'$ is a pure variety.
In fact, a basis of $\Cart(X')$ is given by the rows of the following matrix
$$C_{X'}=\begin{pmatrix} 40 &0 &0&0&0\\
0 &60&0&0&0\\
-20 &-30 &3&0&0\\
-8 &-48&0&2&0\\
15 &37 &-2&-1&1
\end{pmatrix}.$$
Then
$$
Q\cdot C_{X'}^T =\begin{pmatrix}
120 &60&-60&-60&60\\
120 &120&-120 &-120&120
\end{pmatrix},\quad\quad
\Gamma'\cdot C_{X'}^T = \begin{pmatrix}
0&0&0&0&0
\end{pmatrix}
$$
Then we see that $\Pic(X')$ is generated in $\Cl(X')\cong  \Z^2\oplus\Z/2\Z$  by the elements
$$ (120,120), (60,120)$$
 so that $X'$ is pure. On the other hand $m_{\widehat{\Si}_1}=2=\det(\b')$, so proving that a converse of Theorem~\ref{teo:gcdmult1} cannot hold.
\end{example}

\subsection{The case $|\SF(V)|=1$}\label{ssez:SF1}
The aim of this section is to exhibit a large class of pure toric varieties, by establishing the purity of every $\Q$-factorial complete toric variety $X=X(\Sigma)$ whose fan matrix $V$ admits a unique simplicial and complete fan given by $\Sigma$ itself. Geometrically, this property means that a small $\Q$-factorial modification of $X$ is necessarily an isomorphism, as explained in the Introduction.

We need a few preliminary lemmas. If $V$ is an $F$-matrix we put
\begin{eqnarray*}
\mathcal{I}_{V,tot}&=&\{I\subseteq\{1,\ldots, n+r\}\ |\ |I|=r \hbox{ and } \det(V^I)\not=0 \}\\
\mathcal{I}_{V,min}&=&\{I\in\mathcal{I}_{V,tot}\ |\ \langle V^I\rangle \hbox{ does not contain any column of $V$}\\
&&\hskip4cm \text{apart from its generators }\}.
\end{eqnarray*}

 \begin{lemma}\label{lem:mminmtot} Put
 \begin{eqnarray*}
m_{V,tot}&=&\gcd\{\det(V^I)\ |\  I\in\mathcal{I}_{V,tot}\}\\ m_{V,min}&=&\gcd\{\det(V^I)\ |\  I\in\mathcal{I}_{V,min}\}; \end{eqnarray*}
 then $m_{V,min}=m_{V,tot}$.
 \end{lemma}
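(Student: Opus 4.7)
The inclusion $\mathcal{I}_{V,min}\subseteq\mathcal{I}_{V,tot}$ makes the divisibility $m_{V,tot}\mid m_{V,min}$ immediate, so the substantive content is the reverse, $m_{V,min}\mid m_{V,tot}$. My plan is to argue one prime at a time: fix a prime $p\in\Z$, set $k:=v_p(m_{V,min})$, and show that $v_p(\det(V^I))\geq k$ for every $I\in\mathcal{I}_{V,tot}$, proceeding by induction on the integer $N(I):=\#\{j\in I:\v_j\in\sigma_I\}$ where $\sigma_I:=\langle V^I\rangle$. When $N(I)=0$ one has $I\in\mathcal{I}_{V,min}$ and the inequality follows from the very definition of $k$.

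For the inductive step with $N(I)>0$, I would pick $j\in I$ with $\v_j\in\sigma_I$ and expand $\v_j=\sum_{m\in J'}a_m\v_{k_m}$ with $a_m>0$, where $J=I^c=\{k_1,\ldots,k_n\}$ indexes the columns of $V^I$ and $J'\subseteq J$ is the support. Cramer's rule yields $\det(V^{I_m})=\pm a_m\det(V^I)$ for $I_m:=(I\setminus\{j\})\cup\{k_m\}$, whence $v_p(\det(V^{I_m}))=v_p(a_m)+v_p(\det(V^I))$. The crucial point is to find some $m^{\ast}\in J'$ with $v_p(a_{m^{\ast}})\leq 0$: otherwise every $a_m$ satisfies $v_p(a_m)\geq 1$, so $\v_j=p\,w$ with $w:=\sum_{m\in J'}(a_m/p)\v_{k_m}\in N_{(p)}$, which forces $\v_j\in pN_{(p)}\cap N=pN$ and contradicts primitivity of $\v_j$ (the identity $pN_{(p)}\cap N=pN$ holds because every integer $s$ coprime to $p$ is a unit modulo $p$). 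For such an $m^{\ast}$ the new index $I_{m^{\ast}}$ lies in $\mathcal{I}_{V,tot}$ (since $a_{m^{\ast}}>0$) and satisfies $v_p(\det(V^{I_{m^{\ast}}}))\leq v_p(\det(V^I))$, so closing the induction reduces to checking $N(I_{m^{\ast}})<N(I)$.

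This last check goes as follows: the cone $\sigma_{I_{m^{\ast}}}$ is contained in $\sigma_I$, so any column of $V$ lying in it was already in $\sigma_I$; the column $\v_j$ is now a ray of $\sigma_{I_{m^{\ast}}}$ and hence no longer contributes to $N$; and the newly inserted ray $\v_{k_{m^{\ast}}}$ does not contribute either, because the identity $\v_{k_{m^{\ast}}}=a_{m^{\ast}}^{-1}(\v_j-\sum_{m\neq m^{\ast}}a_m\v_{k_m})$ produces a strictly negative coefficient on some $\v_{k_m}$ with $m\in J'\setminus\{m^{\ast}\}$ (the degenerate case $J'=\{m^{\ast}\}$ being excluded, since it would force $\v_j=\v_{k_{m^{\ast}}}$ by primitivity, contradicting the distinctness of columns of $V$). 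The step I expect to require the most care is the primitivity argument, which moves among the lattice $N$, its localization $N_{(p)}$ and the $\Q$-vector space $N\otimes\Q$, and exploits that the columns of $V^I$ are $\Q$-linearly independent without forming a $\Z$-basis of $N$; everything else is routine bookkeeping about simplicial cones.
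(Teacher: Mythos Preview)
Your argument is correct, and the underlying combinatorial mechanism---replace a generator of $\sigma_I$ by an interior column $\v_j$, then invoke primitivity to control $p$-divisibility of the coefficients---is exactly the idea the paper uses. The packaging differs in two respects worth noting.

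First, the paper splits the proof into two stages: it treats only the special case $m_{V,tot}=1$ directly, and then deduces the general statement by passing to the $CF$-matrix $\widehat V$ with $V=\beta\widehat V$ (invoking \cite[Prop.~3.1(3) and Prop.~2.6]{RT-LA&GD}). You instead work one prime at a time with $p$-adic valuations, which lets you handle an arbitrary $F$-matrix $V$ in a single pass and keeps the argument self-contained---no appeal to the structure theory of $CF$-matrices is needed. Second, where the paper argues by choosing an $I_0$ with $p\nmid\det(V^{I_0})$ and \emph{minimal} number of interior columns, you run an explicit induction on $N(I)$; these are of course equivalent, but your version makes the verification that the column-swap strictly decreases $N$ completely transparent (in particular your handling of the boundary case $|J'|=1$ via primitivity and distinctness of columns is cleaner than the paper's somewhat compressed treatment). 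The localization step $pN_{(p)}\cap N=pN$ you flagged is indeed the only place requiring care, and your justification is fine.
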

 \begin{proof} Since $\mathcal{I}_{V,min}\subseteq \mathcal{I}_{V,tot}$ we have $m_{V,tot}|m_{V,min}$  We firstly show that the assertion is true when $m_{V,tot}=1$.  Otherwise there would exist a prime number $p$ dividing $\det(V^I)$ for every $I\in\mathcal{I}_{V,min}$; and there would exist $I_0\in\mathcal{I}_{tot}$ such that $p\not | \det(I_0)$. We choose such an $I_0$ with the property that the number $n_0$ of columns of $V$ belonging to $\langle V^{I_0}\rangle $ is minimum. Let $\v_1,\ldots, \v_n$ be the columns of $V^{I_0}$ and let $\v^*\in\langle V^{I_0}\rangle $ be a column of $V$ different from $\v_i$ for every $i$; then we can write $\v^*=\sum_{i=1}^n \frac{a_i} b \v_i$ with $a_i,b\in\Z$ and $(a_1,\ldots, a_n,b)=1$. For $i=1,\ldots, n$ let $\sigma_i=\langle \v_1,\ldots, \v_{i-1},\v^*,\v_{i+1},\ldots, \v_n\rangle $. Then $|\det(\sigma_i) |=\left(\frac{a_i} b\right)^n |\det(V^{I_0})|$ and $p$ divides $\det(\sigma_i)$ by the minimality  hypothesis on $I_0$. It follows that $p$ divides $a_i$ for $i=1,\ldots, n$; therefore $\v^*\in p\Z^n$ and this is a contradiction because $V$ is a fan matrix, hence reduced (see \cite[Def. 3.13]{RT-LA&GD}).\\
 Suppose now that $m_{V,tot}\not=1$. Then by \cite[Prop. 3.1 (3)]{RT-LA&GD} there exist a $CF$-matrix $\widehat V$ such that $V=\beta\widehat{V}$ for some $\beta\in \M_n(\Z)\cap\GL_n(\Q)$; and $m_{\widehat{V},tot}=1$ by \cite[Prop. 2.6]{RT-LA&GD}, so that we can apply the first part of the proof to $\widehat{V}$ and deduce that $m_{\widehat{V},min}=1$. Notice that $\mathcal{I}_{V,tot}=\mathcal{I}_{\widehat{V},tot}$, $\mathcal{I}_{V,min}=\mathcal{I}_{\widehat{V},min}$ and $\det(V^I)=\det(\beta)\det(\widehat{V}^I)$ for every $I\in\mathcal{I}_{V,tot}$, so that $m_{V,min}=\det(\beta)m_{\widehat{V},min}$ and $m_{V,tot}=\det(\beta)m_{\widehat{V},tot}$. It follows that $m_{V,min}=m_{V,tot}=\det(\beta)$.
 \end{proof}

\begin{lemma}\label{lem:esistenzafan}  Let $\Sigma_0$ be a simplicial fan in $\mathbb{R}^n$ such that $\sigma=|\Sigma_0|$ is a full dimensional convex cone.   Let $\w_1,\ldots, \w_k\in \mathbb{R}^n$ be  such that $\w_i\not\in \sigma$ for $i=1,\ldots, k$. There exists a simplicial fan $\Sigma$ in $\R^n$ such that
\begin{itemize}
\item[a)] $|\Sigma|= \sigma+\langle \w_1,\ldots, \w_k\rangle$;
\item[b)] $\Sigma(1)= \Sigma_0(1)\cup \{\langle\w_1\rangle ,\ldots, \langle \w_k\rangle \}$;
\item[c)] $\Sigma_0 \subseteq \Sigma$.

\end{itemize}   \end{lemma}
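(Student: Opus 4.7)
The plan is to induct on $k$, the number of adjoined vectors. The base case $k=0$ is immediate: set $\Sigma=\Sigma_0$. For the inductive step, suppose we have built a simplicial fan $\Sigma'$ in $\R^n$ satisfying (a), (b), (c) with respect to $\w_1,\ldots,\w_{k-1}$, with support $\sigma'=\sigma+\langle\w_1,\ldots,\w_{k-1}\rangle$ and ray set $\Sigma_0(1)\cup\{\langle\w_1\rangle,\ldots,\langle\w_{k-1}\rangle\}$. I would then adjoin $\w_k$ by splitting into two cases according to whether $\w_k$ lies in $\sigma'$ or not.

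If $\w_k\in\sigma'$, then, by simpliciality of $\Sigma'$, $\w_k$ lies in the relative interior of a unique cone $\tau\in\Sigma'$. If $\tau=\langle\w_k\rangle$ is already a ray, take $\Sigma=\Sigma'$; otherwise, let $\Sigma$ be the star subdivision of $\Sigma'$ at $\w_k$, the standard construction which preserves simpliciality and support while adjoining exactly the new ray $\langle\w_k\rangle$. The hypothesis $\w_k\notin\sigma$ forces $\tau$ not to be a face of any cone of $\Sigma_0$ (otherwise the relative interior of $\tau$ would lie in $\sigma$, contradicting $\w_k\in\tau\setminus\sigma$), so no cone of $\Sigma_0$ is touched by the subdivision and condition (c) is preserved.

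If $\w_k\notin\sigma'$, let $\mathcal F\subseteq\Sigma'(n-1)$ consist of the codimension-one cones lying on $\partial\sigma'$ whose supporting hyperplane $H$ of $\sigma'$ strictly separates $\w_k$ from the interior of $\sigma'$, and set
\[
\Sigma \;=\; \Sigma' \;\cup\; \bigcup_{\tau\in\mathcal F}\bigl\{\text{faces of } \tau+\langle\w_k\rangle\bigr\}.
\]
Each new $n$-cone $\tau+\langle\w_k\rangle$ is simplicial because $\w_k\notin\mathrm{span}(\tau)\subseteq H$, and its rays are those of $\tau$ together with $\langle\w_k\rangle$, proving (b). For (a), the inclusion $|\Sigma|\subseteq\sigma'+\langle\w_k\rangle$ is clear, and the reverse follows by visibility: a point $q+t\w_k$ with $q\in\sigma'$, $t\ge 0$, lying outside $\sigma'$ must exit $\sigma'$ across some facet whose supporting hyperplane strictly separates $\w_k$ from $\mathrm{int}(\sigma')$, and the subdivision of this facet by cones of $\Sigma'(n-1)$ places the point in some $\tau+\langle\w_k\rangle$ with $\tau\in\mathcal F$.

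The main obstacle is to verify the fan axioms for $\Sigma$ in Case B, i.e.\ that any two cones meet in a common face. Two cones of $\Sigma'$ pose no issue. For two new top cones, $(\tau_1+\langle\w_k\rangle)\cap(\tau_2+\langle\w_k\rangle)=(\tau_1\cap\tau_2)+\langle\w_k\rangle$, a common face of both. For a new top cone $\tau+\langle\w_k\rangle$ against an old cone $\mu\in\Sigma'$, strict separation by $H$ forces any point of $\mu\cap(\tau+\langle\w_k\rangle)$ to lie on $H$ (otherwise the $\w_k$-component would place it outside $\sigma'\supseteq\mu$), hence in $\tau$; thus $\mu\cap(\tau+\langle\w_k\rangle)=\mu\cap\tau$, a common face. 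Tangential boundary facets of $\sigma'$ (those whose supporting hyperplane contains $\w_k$) are automatically excluded from $\mathcal F$; one checks they need not contribute any new cone, since the visibility argument already accounts for every point of $\sigma'+\langle\w_k\rangle$ through the strictly separated facets. Finally, $\Sigma_0\subseteq\Sigma'\subseteq\Sigma$ preserves (c) in both cases.
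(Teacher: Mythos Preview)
Your approach is exactly the paper's: induction on $k$, stellar subdivision when $\w_k\in\sigma'$, and the ``placing'' construction (adjoining the visibility cones $\tau+\langle\w_k\rangle$ over boundary facets $\tau\in\mathcal F$) when $\w_k\notin\sigma'$. The one place where you move faster than the paper is the new--new intersection check: you assert
\[
(\tau_1+\langle\w_k\rangle)\cap(\tau_2+\langle\w_k\rangle)=(\tau_1\cap\tau_2)+\langle\w_k\rangle
\]
as if it were formal, but when $\tau_1,\tau_2$ lie on \emph{different} facets of $\sigma'$ this identity is not automatic and is in fact the heart of the verification. The paper supplies the missing argument: if $\x=\mathbf{y}_1+\lambda_1\w_k=\mathbf{y}_2+\lambda_2\w_k$ with $\mathbf{y}_i\in\tau_i$ and, say, $\lambda_1\ge\lambda_2$, then $\mathbf{y}_2=\mathbf{y}_1+(\lambda_1-\lambda_2)\w_k$; pairing against a normal $\n$ to the supporting hyperplane of the facet containing $\tau_1$ (so $\n\cdot\mathbf{y}_1=0$, $\n\cdot\mathbf{y}_2\le 0$, $\n\cdot\w_k>0$) forces $\lambda_1=\lambda_2$, hence $\mathbf{y}_1=\mathbf{y}_2\in\tau_1\cap\tau_2$. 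With that step filled in, your proof and the paper's coincide.
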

\begin{proof}
By induction on $k$. For the case $k=0$, we take $\Sigma=\Sigma_0$.  Assume that the result holds true for $k-1$. Let $\mathcal{W'}= \sigma +\langle \w_1,\ldots, \w_{k-1}\rangle$, $\mathcal{W}=\sigma+\langle \w_1,\ldots, \w_{k}\rangle$. By inductive hypothesis there exists a simplicial fan $\Sigma'$  such that  $|\Sigma'|=\mathcal{W'}$, $\Sigma'(1)= \Sigma_0(1)\cup \{ \langle  \w_1\rangle ,\ldots, \langle \w_{k-1}\rangle \}$ and
$\Sigma_0\subseteq \Sigma'$. We distinguish two cases:
\begin{itemize}
\item {\bf Case 1:}  $\w_{k}\in \mathcal{W'}$, so that $\mathcal{W}=\mathcal{W}'$; let $\tau$ be the minimal cone  in $\Sigma'$ containing $\w_{k}$. We take $\Sigma=s(\w_k,\tau)\Sigma'$, the stellar subdivision of $\Sigma'$ in direction $\w_k$ (see \cite[Def.\,III.2.1]{Ewald96}). Concretely, every $m$-dimensional cone $\mu=\langle \x_1,\ldots, \x_m\rangle\in \Sigma$ containing $\w_k$ is replaced by the set of the $m$-dimensional cones of the form $\langle \x_1,\ldots, \x_{i-1}, \w_k,\x_{i+1},\ldots, \x_m\rangle$. Conditions $a)$ and $b)$ are immediately verified. For condition $c)$ notice that, since $\w_{k}\not \in \sigma$, $\tau$  is not a face of any cone in $\Sigma_0$; therefore $\Sigma_0\subseteq \Sigma$.
\item {\bf Case 2:} $\w_{k}\not\in  \mathcal{W'}$, so that $\mathcal{W'}\subsetneq \mathcal{W}$. Let $\mathcal{F}$ be the set of facets  $f$ in $\Sigma'(n-1)$ which are cutted out by an hyperplane stricly separating $\mathcal{W}'$ and $\w_k$; that is $f\subseteq \partial\mathcal{W}'$, $f\not\subseteq\partial\mathcal{W}$ and   the cone $\tau_f=\langle f, \w_{k}\rangle$ is $n$-dimensional.\\
 Notice that $\mathcal{F}\not= \emptyset$: in fact
 $\mathcal{W}'$ is a convex polyhedral cone and  $\w_k\not\in \mathcal{W}'$; then there is an hyperplane $H$ cutting a facet $\varphi$ of  $\mathcal{W}'$ and strictly separating $\mathcal{W}'$ and $\w_k$; let $f$ be a facet of $\Sigma'$ contained in $\varphi$; then $f\in \mathcal{F}$.

\end{itemize}   Consider the set of simplicial cones
$$\Sigma=\Sigma'\cup\{\tau\ |\ \tau \preceq\tau_{f}\ \text{for some}\ f\in\mathcal{F} \}.$$
We claim that $\Sigma$ is a fan. By construction it is closed by faces, so that it suffices to show that $\tau_1\cap\tau_2$ is a face of both $\tau_1$ and $\tau_2$,  whenever $\tau_1,\tau_2\in\Sigma$.  Let $\tau_1,\tau_2\in\Sigma$. If they are both in $\Sigma'$ then $\tau_1\cap\tau_2$ is a face of $\tau_1,\tau_2$ because $\Sigma'$ is a fan. Assume that $\tau_1\in\Sigma'$ and $\tau_2\not\in \Sigma'$; then $\tau_2=\langle \tau, \w_k\rangle$, where $\tau$ is a face of some $f\in\mathcal{F}$. Let $H$ be the hyperplane cutting $f$; then $\w_k$ lies on the other side of $H$ with respect to $\mathcal{W'}$, so that $\tau_1\cap\tau_2\subseteq f$. Therefore $\tau_1\cap \tau_2=\tau_1\cap \tau \in\Sigma'$, so that it is a face of both $\tau_1$ and $\tau$ by induction hypothesis; but $\tau\preceq \tau_1$ so that $\tau_1\cap \tau_2\preceq \tau_2$. Finally, assume that both  $\tau_1$ and $\tau_2$ are not in $\Sigma'$. This means that there are facets $f_1,f_2$ in $\mathcal{F}$ and faces $\mu_1\preceq f_1,\mu_2\preceq f_2$ such that $\tau_1=\langle \mu_1,\w_k\rangle $ and $\tau_2=\langle \mu_2,\w_k\rangle $. We show that $\tau_1\cap \tau_2=\langle \mu_1\cap \mu_2,\w_k \rangle\in \Sigma$. Let $\x\in \tau_1\cap \tau_2$: then we can write $\x=\mathbf{y}_1+\lambda_1\w_k=\mathbf{y}_2+\lambda_2\w_k$, where $\mathbf{y}_1\in\mu_1, \mathbf{y}_2\in\mu_2$ and $\lambda_1, \lambda_2 \geq 0$. Whitout loss of generality we can assume $\lambda_1\geq\lambda_2$; put $\lambda =\lambda_1-\lambda_2$; then $\mathbf{y}_1+\lambda\w_k=\mathbf{y}_2$. Let $H$ be the hyperplane cutting $f_1$; since $f_1\in\mathcal{F}$, $\w_k\not\in H$, so that there esists a vector  $\n$ be a normal to $H$ such that $\n\cdot \x\leq 0$ for every $\x\in \mathcal{W}'$ and $\n\cdot \w_k>0$. Then $\n\cdot\ \mathbf{y}_2\leq 0$ and $\n\cdot (\mathbf{y}_1+\lambda\w_k)=\n\cdot\lambda\w_k\geq 0$, so that $\lambda =0$; this implies $\mathbf{y}_1=\mathbf{y}_2\in\mu_1\cap\mu_2$ and $\x\in\langle \mu_1\cap\mu_2,\w_k\rangle$. \\
Now we show that condition $a)$ holds for $\Sigma$.
By construction $|\Sigma|=|\Sigma'|\cup\bigcup_{f\in\mathcal{F}} \tau_f\subseteq \mathcal{W'}+\langle \w_k\rangle=\mathcal{W}$; conversely, let $\x\in\mathcal{W}$; if $\x\in\mathcal{W}'$ then $\x\in |\Sigma'|\subseteq |\Sigma|$; if $\x\not\in \mathcal{W}'$ then $\x=\mathbf{y}+\lambda\w_k$ for some $\mathbf{y}\in \mathcal{W}'$ and $\lambda>0$; up to replacing $\mathbf{y}$ by $\mathbf{y}+\mu\w_k$ for some $0\leq \mu <\lambda$ we can assume that $\mathbf{y}+\epsilon\w_k\not\in \mathcal{W}'$ if $\epsilon >0$.   Then for every $\epsilon$ there exists an hyperplane $H_\epsilon$ cutting a facet $\varphi_\epsilon$ of $\mathcal{W}'$ which separates $\mathcal{W}'$ and $\mathbf{y}+\epsilon\w_k$; since the facets of $\mathcal{W}'$ are finitely many,  by the pigeonhole principle  $H_\epsilon,\varphi_\epsilon$ do not depend on $\epsilon$ for $\epsilon \to 0$; call them $H,\varphi$ respectively. Let $\mathbf{n}$ be a normal vector to  $H$ such that $\mathbf{n}\cdot \mathbf{y}\leq 0$ and $\mathbf{n}\cdot (\mathbf{y}+\epsilon \w_k)>0$ for $\epsilon\to 0$; the existence of such  $\mathbf{n}$  implies $\mathbf{n}\cdot \mathbf{y}=0$ and $\mathbf{n}\cdot \w_k>0$, so that $\mathbf{y}\in\varphi$ and $\w_k\not\in H$. Then there is a facet $f\in \mathcal{F}$ such that $\mathbf{y}\in f$; therefore $\x\in \tau_f$, and $a)$ is proved. We showed that $\Sigma\setminus\Sigma'\not=\emptyset$; and every cone in $\Sigma \setminus \Sigma'$ has $\w_k$ as a vertex and all other vertices in $\Sigma'(1)$. Then condition   $b)$  is verified. Condition $c)$ is obvious since $\Sigma_0\subseteq \Sigma'\subseteq \Sigma$.
\end{proof}
\begin{corollary}\label{cor:esistenzauno} Let $V$ be a fan matrix. Then for every $I\in\mathcal{I}_{V,min}$ the cone $\langle V^I\rangle$ belongs to a fan in $\SF(V)$.
\end{corollary}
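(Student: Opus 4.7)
The plan is to derive the corollary as a direct consequence of Lemma~\ref{lem:esistenzafan}. First I would set $\sigma:=\langle V^I\rangle$ and take $\Sigma_0$ to be the simplicial fan consisting of $\sigma$ together with all its faces. Since $I\in\mathcal{I}_{V,min}\subseteq\mathcal{I}_{V,tot}$, the columns of $V^I$ are $n$ linearly independent vectors, so $\sigma$ is a full-dimensional simplicial cone and $\Sigma_0$ is a genuine simplicial fan whose support is the full-dimensional convex cone $\sigma$, matching the hypothesis of the Lemma.

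Next, let $\w_1,\ldots,\w_r$ denote the columns of $V$ indexed by $I$ (the columns \emph{not} appearing in $V^I$). The defining property of $\mathcal{I}_{V,min}$ is precisely that $\sigma=\langle V^I\rangle$ contains no column of $V$ apart from its generators; in particular $\w_j\not\in\sigma$ for each $j=1,\ldots,r$. This is exactly the non-containment condition required to invoke Lemma~\ref{lem:esistenzafan} with the data $(\Sigma_0;\w_1,\ldots,\w_r)$.

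Applying the Lemma produces a simplicial fan $\Sigma$ satisfying
\[
|\Sigma|=\sigma+\langle\w_1,\ldots,\w_r\rangle,\qquad
\Sigma(1)=\Sigma_0(1)\cup\{\langle\w_1\rangle,\ldots,\langle\w_r\rangle\}
=\{\langle\v_1\rangle,\ldots,\langle\v_{n+r}\rangle\},
\]
and $\Sigma_0\subseteq\Sigma$, so in particular $\sigma\in\Sigma(n)$. To conclude that $\Sigma\in\SF(V)$ it only remains to check completeness, i.e.\ $|\Sigma|=\R^n$. But $|\Sigma|$ equals the positive span of all columns of $V$, and since $V$ is a fan matrix the set $\SF(V)$ is nonempty, forcing the columns of $V$ to positively span $\R^n$; hence $|\Sigma|=\R^n$ and $\Sigma\in\SF(V)$ is the desired fan containing $\langle V^I\rangle$ as a maximal cone.

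There is no real obstacle here beyond the verification of the minor compatibility conditions: the content of the corollary has been entirely absorbed into Lemma~\ref{lem:esistenzafan}, and the \emph{min} hypothesis on $I$ exists precisely to guarantee that none of the new rays to be added lies inside $\sigma$. The only delicate point is to make sure the constructed fan is complete, which follows at once from the hypothesis that $V$ is a fan matrix.
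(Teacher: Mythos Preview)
Your proof is correct and follows exactly the same approach as the paper: the paper's proof consists of the single sentence ``It suffices to apply Lemma~\ref{lem:esistenzafan} in the case $\Sigma_0=\{\tau\ |\ \tau\preceq \langle V^I\rangle \}$ and $\w_1,\ldots, \w_k$ are the columns of $V_I$,'' and you have simply spelled out the verification of the Lemma's hypotheses and the completeness of the resulting fan that the paper leaves to the reader.
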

\begin{proof} It suffices to apply Lemma \ref{lem:esistenzafan} in the case $\Sigma_0=\{\tau\ |\ \tau\preceq \langle V^I\rangle \}$ and  $\w_1,\ldots, \w_k$ are the columns of $V_I$.
\end{proof}
Corollary \ref{cor:esistenzauno} has  the following immediate consequence:
\begin{corollary}\label{cor:esistenzadue} Let $V$ be a fan matrix such that $\SF(V)$ contains a unique fan $\Sigma$. Then for every $I\in\mathcal{I}_{V,min}$ the cone $\langle V^I\rangle$ belongs to $\Sigma$.
\end{corollary}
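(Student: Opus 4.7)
The proof is indeed immediate from the preceding Corollary~\ref{cor:esistenzauno}, so the ``plan'' is almost just an observation. Given $I\in\mathcal{I}_{V,min}$, Corollary~\ref{cor:esistenzauno} supplies \emph{some} fan $\Sigma'\in\SF(V)$ that contains the cone $\langle V^I\rangle$ (as a maximal cone, since $|I|=r$ makes it $n$-dimensional and simplicial by the definition of $\mathcal{I}_{V,tot}$). Under the standing hypothesis $|\SF(V)|=1$, the set $\SF(V)$ has $\Sigma$ as its unique element, so necessarily $\Sigma'=\Sigma$ and hence $\langle V^I\rangle\in\Sigma$. I would present this in two or three lines.

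If one wishes to highlight why Corollary~\ref{cor:esistenzauno} delivers what is needed, it is worth recalling the construction hidden in its proof: one starts from $\Sigma_0=\{\tau\mid\tau\preceq\langle V^I\rangle\}$, which is a simplicial fan whose support is the full-dimensional convex cone $\langle V^I\rangle$, and then invokes Lemma~\ref{lem:esistenzafan} with $\w_1,\ldots,\w_k$ equal to the columns of $V_I$ (the generators of $V$ not used in $V^I$). The minimality hypothesis $I\in\mathcal{I}_{V,min}$ ensures that each $\w_j$ lies outside $\sigma=\langle V^I\rangle$, so the lemma applies and produces a simplicial fan $\Sigma'$ with $\Sigma'(1)=\{\langle\v_1\rangle,\dots,\langle\v_{n+r}\rangle\}$ and $\Sigma_0\subseteq\Sigma'$. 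Completeness of $\Sigma'$ is not automatic from the lemma but follows because $V$ is a fan matrix, so $\langle V\rangle=\R^n$.

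The only conceivable obstacle is therefore not in this corollary itself but already absorbed in Corollary~\ref{cor:esistenzauno}; once that is in hand, the uniqueness hypothesis $|\SF(V)|=1$ collapses the argument to a single line. So my proposed write-up is simply:

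\emph{Proof.} By Corollary~\ref{cor:esistenzauno} the cone $\langle V^I\rangle$ belongs to some fan in $\SF(V)$. Since $\SF(V)=\{\Sigma\}$ by hypothesis, this fan must be $\Sigma$, and the conclusion follows. \qed
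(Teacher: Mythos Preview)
Your proposal is correct and matches the paper's approach exactly: the paper does not even write out a proof, merely stating that Corollary~\ref{cor:esistenzadue} is an ``immediate consequence'' of Corollary~\ref{cor:esistenzauno}, which is precisely the one-line argument you give.
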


We are now in position to prove our purity condition:

\begin{proposition}\label{prop:unsolofan}
Let $X$ be a $\Q$-factorial complete toric variety and let $V$ be a fan matrix of $X$. Assume that $\SF(V)$ contains a unique fan.  Then $X$ is pure.
\end{proposition}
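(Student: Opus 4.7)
The plan is to show that $m_{\widehat{\Sigma}}=1$; once this is established, the hypothesis $(\det(\beta),m_{\widehat{\Sigma}})=1$ of Theorem \ref{teo:gcdmult1} is trivially satisfied, so purity follows at once. The argument has two ingredients: transferring the uniqueness hypothesis from $V$ to the covering fan matrix $\widehat{V}$, and then exploiting the fact that a $CF$-matrix has $m_{tot}=1$ together with Lemma \ref{lem:mminmtot}.

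For the first ingredient, I would use that $V=\beta\widehat{V}$ with $\beta$ inducing an isomorphism $\beta_\R\colon N_\R\to N_\R$ which identifies the rays of $\widehat{V}$ with those of $V$ and sends simplicial complete fans to simplicial complete fans. This gives a bijection $\SF(\widehat{V})\to\SF(V)$, $\widehat\Sigma'\mapsto\beta_\R(\widehat\Sigma')$, so the hypothesis $|\SF(V)|=1$ forces $\SF(\widehat{V})=\{\widehat{\Sigma}\}$. Applying Corollary \ref{cor:esistenzadue} to the fan matrix $\widehat{V}$, every $I\in\mathcal{I}_{\widehat{V},min}$ then produces a maximal cone $\langle\widehat{V}^I\rangle$ of $\widehat{\Sigma}$, yielding the inclusion $\mathcal{I}_{\widehat{V},min}\subseteq\mathcal{I}^{\widehat{\Sigma}}$ and therefore the divisibility $m_{\widehat{\Sigma}}\mid m_{\widehat{V},min}$.

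For the second ingredient, since $\widehat{V}$ is a $CF$-matrix, \cite[Prop.~2.6]{RT-LA&GD} gives $m_{\widehat{V},tot}=1$ (this was already invoked inside the proof of Lemma \ref{lem:mminmtot}), and Lemma \ref{lem:mminmtot} then promotes this to $m_{\widehat{V},min}=1$. Combined with the divisibility just established, one obtains $m_{\widehat{\Sigma}}=1$, and Theorem \ref{teo:gcdmult1} concludes that $X$ is pure.

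I do not expect a real obstacle here: the statement is essentially an assembly of previously established machinery. The only mildly delicate step is recognising that Corollary \ref{cor:esistenzadue}, although formulated for a generic fan matrix, applies just as well to $\widehat{V}$ once the uniqueness of the fan has been transferred through $\beta_\R$; this transfer uses only that $\beta_\R$ is an $\R$-linear automorphism preserving both the ray structure and the simplicial/complete conditions that define $\SF(\cdot)$.
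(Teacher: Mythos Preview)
Your proof is correct and follows essentially the same route as the paper's: show $m_{\widehat{\Sigma}}=1$ via Corollary~\ref{cor:esistenzadue} and Lemma~\ref{lem:mminmtot}, then invoke Theorem~\ref{teo:gcdmult1} (equivalently Corollary~\ref{cor:gcdmult}). The only cosmetic difference is that the paper asserts the equality $\mathcal{I}^{\widehat{\Sigma}}=\mathcal{I}_{\widehat{V},min}$ (the reverse inclusion being automatic, since a maximal cone of a simplicial complete fan never contains an extra ray), whereas you use only the inclusion $\mathcal{I}_{\widehat{V},min}\subseteq\mathcal{I}^{\widehat{\Sigma}}$ and the resulting divisibility $m_{\widehat{\Sigma}}\mid m_{\widehat{V},min}$; your explicit transfer of the uniqueness hypothesis from $\SF(V)$ to $\SF(\widehat{V})$ via $\beta_{\R}$ is exactly what the paper leaves implicit.
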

\begin{proof}  Let $Y=Y(\widehat{\Sigma})$ be  the universal $1$-covering of $X$ and let $\widehat{V}$ be a fan matrix associated to $Y$. Then  $\widehat{V}$ is a $CF$-matrix, so that  $m_{\widehat{V},tot}=1$ by \cite[Prop. 2.6 and Def. 2.7]{RT-LA&GD}. By Corollary \ref{cor:esistenzadue},
$ \mathcal{I}^{\Si}= \mathcal{I}^{\widehat{\Si}}=\mathcal{I}_{\widehat{V},min}$ so that
  $m_{\widehat{\Sigma}}=m_{\widehat{V},min}$ and, by Lemma \ref{lem:mminmtot}, the latter is equal to $m_{\widehat{V},tot}=1$. Then $X$ is pure by Corollary \ref{cor:gcdmult}.
\end{proof}

\begin{remark}
 Proposition \ref{prop:unsolofan} implies that the following  toric varieties are pure:
 \begin{itemize}
  \item $2$-dimensional $\Q$-factorial complete toric varieties
  \item  toric varieties whose universal $1$-covering is a product of weighted projective spaces.
  \end{itemize}
\end{remark}

\begin{remark}
In the case $|\SF(V)|=1$ the unique complete and $\Q$-factorial toric variety $X$ whose fan matrix is $V$ is necessarily projective. This is a consequence of the fact that  $\mathrm{Nef}(X)=\mathrm{\overline{Mov}}(X)$, recalling that the latter is a full dimensional cone, by \cite[Thm. 2.2.2.6]{ADHL}.
\end{remark}

\subsubsection{An application to completions of fans} \label{sssez:completamento}
Lemma \ref{lem:esistenzafan} can be applied to give a complete refinement $\Si$ of a given fan $\Si_0$ satisfying the further additional hypothesis:
\begin{itemize}
  \item[(*)] \emph{assume that $|\Si|=\Si_0 + \langle \w_1,\ldots, \w_k\rangle=\R^n$\,.}
\end{itemize}
In particular, if we consider the fan $\Si'=\Si_0\cup\{\langle\w_1\rangle\,\ldots,\langle\w_k\rangle\}$ then Lemma~\ref{lem:esistenzafan} \emph{gives a completion $\Si$ of $\Si'$ without adding any new ray}.

The latter seems to us an original result. In fact, it is actually well known that every fan $\Si'$ can be refined to a complete fan $\Si$ (see \cite[Thm.~III.2.8]{Ewald96}, \cite{EwaldIshida06} and the more recent \cite{Rohrer11}). Anyway, in  general the known completion procedures need the addition of some new ray, so giving $\Si'(1)\subsetneq\Si(1)$. As observed in the Remark following the proof of \cite[Thm.~III.2.8]{Ewald96}, just for $n=3$ ``completion without additional 1-cones can be found'' but this fact does no more hold for $n\geq 4$: at this purpose, Ewald refers the reader to the Appendix to section III, where he is further referred to a number of references. Unfortunately we were not able to recover, from those references and, more generally, from the current literature, as far as we know, an explicit example of a 4-dimensional fan which cannot be completed without adding some new ray.
 For this reason, we believe that the following example may fill up a lack in the literature on these topics.

 \begin{example} \label{ex:noncompletabile}
Consider the fan matrix
\begin{equation*}
  V= \left(
  \begin{array}{ccccccc}
    1&0&0&0&0&-1&1 \\
    0&1&0&0&-1&-1&2 \\
    0&0&1&0&-1&0&1 \\
    0&0&0&1&-1&-1&1 \\
  \end{array}
\right)
\end{equation*}
and consider the fan $\Si$ given by taking all the faces of the following three maximal cones generated by columns of $V$
\begin{equation*}
  \Si(4)=\left\{\langle2,3,4,6\rangle,\langle2,4,5,7\rangle,\langle1,4,5,6\rangle\right\}
\end{equation*}
The fact that $\Si$ is a fan follows immediately by easily checking that
\begin{eqnarray*}
\langle2,3,4,6\rangle\cap\langle2,4,5,7\rangle&=&\langle2,4\rangle\\
\langle2,3,4,6\rangle\cap\langle1,4,5,6\rangle&=&\langle4,6\rangle\\
\langle1,4,5,6\rangle\cap\langle2,4,5,7\rangle&=&\langle4,5\rangle\,.
\end{eqnarray*}
Notice that $\Si$ is not a complete fan since e.g. the 3-dimensional cone
$$\langle2,3,6\rangle=\left\langle\begin{array}{ccc}
                                    0 & 0 & -1 \\
                                    1 & 0 & -1 \\
                                    0 & 1 & 0 \\
                                    0 & 0 & -1
                                  \end{array}
\right\rangle$$
is a facet of the unique cone
$$\langle2,3,4,6\rangle=\left\langle\begin{array}{cccc}
                                    0 & 0 & 0 & -1 \\
                                    1 & 0 & 0 & -1 \\
                                    0 & 1 & 0 & 0 \\
                                    0 & 0 & 1 & -1
                                  \end{array}\right\rangle\in\Si(4)\,.$$
Moreover it cannot be completed since every further maximal cone admitting $\langle2,3,6\rangle$ as a facet does not intersect correctly the remaining cones in $\Si(4)$. In fact
\begin{itemize}
  \item $\langle1,2,3,6\rangle\cap\langle2,4,5,7\rangle\supsetneq \langle2\rangle$\,: consider e.g. $$\v=\left(
                                                                                                     \begin{array}{cccc}
                                                                                                        1&1&0&0 \\
                                                                                                     \end{array}
                                                                                                   \right)^T\in
                                                                                                   \langle1,2\rangle\cap\langle5,7\rangle$$

  \item $\langle2,3,5,6\rangle\cap\langle1,4,5,6\rangle\supsetneq \langle5,6\rangle$\,: consider e.g.
  $$\w=\left(
        \begin{array}{cccc}
          0 & -2 & -1 & -2 \\
        \end{array}
      \right)^T\in\langle3,5\rangle\cap\langle1,5,6\rangle\quad\text{but}\quad\w\not\in\langle5,6\rangle
      $$
  \item $\langle2,3,6,7\rangle$ is not a maximal cone.
\end{itemize}
 \end{example}

\section{A characterization of $\Pic(X)$ for some pure toric variety}\label{sez:pic}

Let $X=X(\Sigma)$ be a complete $\Q$-factorial toric variety having $V$ as a fan matrix; let $Y$ be its universal $1$-covering, $\widehat V$ be a fan matrix associated to $Y$ and $V=\beta \widehat V$.
Recall that a Weil divisor $L=\sum_{j=1}^{n+r}a_jD_j$ is a Cartier divisor if it is locally principal, that is
$$\forall I\in\mathcal{I}^\Sigma\ \exists \mathbf{m}_I\in M \hbox{ such that } \mathbf{m}_I\cdot \mathbf{v}_j=a_j, \forall j\not\in I. $$
Let $\Cart(X)$ be the group of torus invariant Cartier divisors of $X$. Then
$$\Cart(X)=\bigcap_{I\in \mathcal{I}^\Sigma}\mathcal{L}_r(V^I)=\bigcap_{I\in \mathcal{I}^\Sigma}(\mathcal{L}_r(V)\oplus E_I) $$
recalling notation (\ref{EI}).
The Picard group $\Pic(X)$ of $X$ is the image of $\Cart(X)$ in $\Cl(X)$, via morphism $d_X$ (recall here and in the following, notation introduced in diagram (\ref{div-diagram-covering})).
\\
In \cite[Thm.~2.9.2]{RT-LA&GD} we showed that if $Y$ is a PWS then we can identify
\begin{equation}\label{eq:picY}
\Pic(Y)=\bigcap_{I\in \mathcal{I}^\Sigma} \mathcal{L}_c(Q_I)\subseteq \Z^r.\end{equation}
Let $\x\in\Pic(Y)$. For $I\in \mathcal{I}^\Sigma$ we can write $\x=Q\cdot \mathbf{a}_I$ where $\mathbf{a}_{I}\in E_I$\,. If $I,J\in \mathcal{I}^\Sigma$ put
\begin{equation}\label{uIJ}
  \u_{IJ}=\mathbf{a}_I-\mathbf{a}_J\in\ker(Q)=\mathcal{L}_r(\widehat V)\,.
\end{equation}
Let $\z\in\Cart(Y)$ such that $Q\cdot\z=\x$. By definition, for every $I\in \mathcal{I}^\Sigma$ there is a unique decomposition $\z=\t(I)+\mathbf{a}_I$ with $\t(I)\in\mathcal{L}_r(\widehat{V})$. Moreover, \begin{equation}
\label{eq:cartX}
\z\in\Cart(X)\Leftrightarrow \t(I)\in\mathcal{L}_r(V), \forall I\in\mathcal{I}^\Sigma.\end{equation}
\begin{proposition}\label{prop:piccharacterization} $\x\in\overline{\a}(\Pic(X))$ if and only if $\x\in\Pic(Y)$ and $\u_{IJ}\in\mathcal{L}_r(V)$, for every $I,J\in\mathcal{I}^\Sigma$, where $\u_{IJ}$ is defined by (\ref{uIJ}).
\end{proposition}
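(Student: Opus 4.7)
The plan is to use the explicit decomposition $\z=\t(I)+\mathbf{a}_I$ in both directions, exploiting the fact that via the identification $\alpha:\Weil(X)\xrightarrow{\sim}\Weil(Y)$ of diagram (\ref{div-diagram-covering}), $\Cart(X)$ is carried inside $\Cart(Y)$: indeed, if $D=\sum a_jD_j\in\Cart(X)$ and $\mathbf{m}_I\in M$ satisfies $\mathbf{m}_I\cdot\v_j=a_j$ for $j\notin I$, then $\beta^T\mathbf{m}_I\in M$ satisfies $(\beta^T\mathbf{m}_I)\cdot\widehat{\v}_j=\mathbf{m}_I\cdot\beta\widehat{\v}_j=\mathbf{m}_I\cdot\v_j=a_j$, so that $\alpha(D)\in\Cart(Y)$ and its $\mathcal{L}_r(\widehat{V})$--component actually lies in $\mathcal{L}_r(V)$. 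A preliminary observation is that the $\mathbf{a}_I$ attached to $\x$ in the statement is unique, because $E_I\cap\mathcal{L}_r(\widehat{V})=0$: any $\widehat{V}^T\mathbf{m}\in E_I$ satisfies $\mathbf{m}\cdot\widehat{\v}_j=0$ for the $n$ vectors $\widehat{\v}_j$ with $j\notin I$, which are a basis of $N_\R$, forcing $\mathbf{m}=0$. Hence the $\mathbf{a}_I$ appearing in $\x=Q\mathbf{a}_I$ coincides with the one coming from any decomposition $\z=\t(I)+\mathbf{a}_I$ with $Q\z=\x$.

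For the $(\Rightarrow)$ direction, assume $\x=\overline{\a}(d_X(D))$ with $D\in\Cart(X)$, and set $\z:=\alpha(D)$. Commutativity of (\ref{div-diagram-covering}) yields $Q\z=d_Y(\z)=\overline{\a}(d_X(D))=\x$, so $\x\in\Pic(Y)$ by (\ref{eq:picY}). Writing the unique decomposition $\z=\t(I)+\mathbf{a}_I$ for every $I\in\mathcal{I}^\Sigma$, condition (\ref{eq:cartX}) applied to $\z\in\Cart(X)$ gives $\t(I)\in\mathcal{L}_r(V)$. Therefore
\begin{equation*}
\u_{IJ}=\mathbf{a}_I-\mathbf{a}_J=\t(J)-\t(I)\in\mathcal{L}_r(V),
\end{equation*}
as desired.

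For the $(\Leftarrow)$ direction, fix once and for all a reference index $I_0\in\mathcal{I}^\Sigma$ and define $\z:=\mathbf{a}_{I_0}\in\Weil(Y)$. For every $I\in\mathcal{I}^\Sigma$ one has
\begin{equation*}
\z=\mathbf{a}_{I_0}=(\mathbf{a}_{I_0}-\mathbf{a}_I)+\mathbf{a}_I=\u_{I_0 I}+\mathbf{a}_I,
\end{equation*}
with $\u_{I_0 I}\in\ker(Q)=\mathcal{L}_r(\widehat{V})$, so that this is precisely the unique decomposition of $\z$ relative to $I$, with $\t(I)=\u_{I_0 I}$. By hypothesis $\u_{I_0 I}\in\mathcal{L}_r(V)$ for all $I$, so (\ref{eq:cartX}) yields $\z\in\Cart(X)$ (through the identification $\alpha$). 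Then $\overline{\a}(d_X(\alpha^{-1}(\z)))=d_Y(\z)=Q\z=Q\mathbf{a}_{I_0}=\x$, proving $\x\in\overline{\a}(\Pic(X))$.

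The argument is not really obstructed by a hard step; the only care required is bookkeeping around the identification $\alpha$ and the uniqueness of the $\mathbf{a}_I$, already discussed above. If there is a delicate point, it is verifying that the $\mathbf{a}_I$ attached abstractly to $\x\in\Pic(Y)$ coincides with the one extracted from a concrete lift $\z$: once this is settled via $E_I\cap\mathcal{L}_r(\widehat{V})=0$, the two implications are essentially symmetric translations of (\ref{eq:cartX}) into the equalities $\u_{IJ}=\t(J)-\t(I)$.
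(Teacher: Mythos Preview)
Your proof is correct and follows essentially the same route as the paper's. Both directions hinge on the identity $\u_{IJ}=\t(J)-\t(I)$ together with criterion (\ref{eq:cartX}); for the converse, the paper picks $\z'\in\Cart(Y)$ and sets $\z=\z'-\t'(I_0)$, which is exactly your $\z=\mathbf{a}_{I_0}$, so the arguments coincide. Your explicit remark that $E_I\cap\mathcal{L}_r(\widehat V)=0$ (hence $\mathbf{a}_I$ is uniquely determined by $\x$) is a useful clarification that the paper uses only implicitly.
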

\begin{proof} Suppose that $\x\in\overline{\a}(\Pic(X))$. Then there exists $\z\in\Cart(X)$ such that $Q\cdot \z=\x$.  For every $I\in \mathcal{I}^\Sigma$ consider the decomposition $\z=\t(I)+\mathbf{a}_I$ with $\t(I)\in\mathcal{L}_r(V)$. Then $\u_{IJ}=\mathbf{a}_I-\mathbf{a}_J= \t(J)-\t(I)\in\mathcal{L}_r(V)$ for every $I,J\in I^\Sigma$. Conversely, suppose that $\u_ {IJ}\in\mathcal{L}_r(V)$ for every $I,J\in\mathcal{I}^\Sigma$. Let $\z'\in\Cart(Y)$ be such that $Q\cdot \z'=\x$. For every $I\in \mathcal{I}^\Sigma$ there is a decomposition $\z'=\t'(I)+\mathbf{a}_I$ with $\t'(I)\in\mathcal{L}_r(\widehat V)$. Fix $I_0\in \mathcal{I}^\Sigma$ and put $\z=\z'-\t'(I_0)$. We claim that $\z\in\Cart(X)$. Indeed, let $I\in\mathcal{I}^\Sigma$ and decompose $\z=\t(I)+\mathbf{a}_I$ with $\t(I)\in\mathcal{L}_r(\widehat V)$ and  $\t(I_0)=\mathbf{0}$. It follows that for every $I\in\mathcal{I}^\Sigma$
$$\t(I)=\t(I)-\t(I_0)=\mathbf{a}_{I_0}-\mathbf{a}_I=\u_{I_0I}\in\mathcal{L}_r(V).$$
\end{proof}

\begin{theorem}\label{thm:pic}
Let $X$ be a pure $\Q$-factorial complete toric variety and choose an isomorphism $\Cl(X)\cong \Z^r\oplus T$ such that $\Pic(X)$ is mapped in $\Z^r$. Then the following characterization of $\Pic(X)$ holds:
\begin{equation*}
  \x\in\Pic(X)\,\Leftrightarrow\,\forall\,I,J\in\mathcal{I}^\Sigma\quad \x\in \bigcap_{I\in \mathcal{I}^\Sigma} \mathcal{L}_c(Q_I)\ \text{and}\ \u_{IJ}\in\mathcal{L}_r(V)
\end{equation*}
where $\u_{IJ}$ is defined by (\ref{uIJ}).
\end{theorem}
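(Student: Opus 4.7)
The plan is to deduce Theorem~\ref{thm:pic} from the already-proven Proposition~\ref{prop:piccharacterization}, using purity to transport the characterization of $\overline{\alpha}(\Pic(X))\subseteq \Cl(Y)$ back to $\Pic(X)\subseteq \Cl(X)$. Indeed, Proposition~\ref{prop:piccharacterization} already contains essentially all the combinatorial content; what remains is only to show that, in the pure case, the two groups $\Pic(X)$ and $\overline{\alpha}(\Pic(X))$ may be canonically identified through the chosen splitting.

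First, I would invoke purity: by Definition~\ref{def:pure} and the fact that $\Pic(X)$ is free (since $X$ is $\Q$-factorial and complete, by \cite[Prop.~4.2.5]{CLS}), one can fix a free part $F_W$ of $\Cl(X)$ containing $\Pic(X)$. Choosing the isomorphism $\Cl(X)\cong \Z^r\oplus T$ as in the statement means precisely identifying this $F_W$ with the $\Z^r$ summand, so that $\Pic(X)\subseteq \Z^r$.

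Next, I would examine the map $\overline{\alpha}:\Cl(X)\twoheadrightarrow \Cl(Y)$ in diagram~(\ref{div-diagram-covering}). Its kernel is $T$, so its restriction to the free summand $\Z^r\subseteq \Cl(X)$ is an isomorphism onto $\Cl(Y)\cong \Z^r$. In particular $\overline{\alpha}$ is injective on $\Pic(X)$, which yields the tautology
\[
\x\in\Pic(X)\ \Longleftrightarrow\ \overline{\alpha}(\x)\in\overline{\alpha}(\Pic(X)),
\]
valid for any $\x\in \Z^r\subseteq \Cl(X)$. This is the only place where purity is used: without it, $\Pic(X)$ would meet the torsion $T$ non-trivially and the restriction of $\overline{\alpha}$ to $\Pic(X)$ would lose information.

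Finally, applying Proposition~\ref{prop:piccharacterization} to $\overline{\alpha}(\x)\in \Cl(Y)$, the condition $\overline{\alpha}(\x)\in \overline{\alpha}(\Pic(X))$ is equivalent to $\overline{\alpha}(\x)\in \Pic(Y)=\bigcap_{I\in\mathcal{I}^\Sigma}\mathcal{L}_c(Q_I)$ together with $\u_{IJ}\in \mathcal{L}_r(V)$ for all $I,J\in\mathcal{I}^\Sigma$, where $\u_{IJ}$ is defined by~(\ref{uIJ}). Under the isomorphism $\overline{\alpha}|_{\Z^r}:\Z^r\xrightarrow{\sim}\Cl(Y)\cong \Z^r$, both conditions transport verbatim to conditions on $\x$ itself; combining with the previous equivalence yields the claimed characterization. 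The only subtlety worth flagging in the write-up is the consistency of the two identifications of $\Z^r$ (the free summand of $\Cl(X)$ and $\Cl(Y)$ itself) via $\overline{\alpha}$; this is, however, a matter of bookkeeping rather than a genuine obstacle, since both $\mathcal{L}_c(Q_I)$ and the membership relation $\u_{IJ}\in\mathcal{L}_r(V)$ are intrinsic to the toric data and are preserved by the identification.
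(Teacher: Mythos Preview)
Your proof is correct and follows essentially the same route as the paper: the paper defines the section $s:\Z^r\to \Z^r\oplus T$, $s(a)=(a,0)$, checks $\overline{\alpha}\circ s=\id$ and $s\circ\overline{\alpha}|_{\Pic(X)}=\id_{\Pic(X)}$ (which is exactly your observation that $\overline{\alpha}|_{\Z^r}$ is an isomorphism and purity places $\Pic(X)$ inside $\Z^r$), and then invokes Proposition~\ref{prop:piccharacterization}. One small inaccuracy worth correcting in your write-up: your parenthetical that without purity ``$\Pic(X)$ would meet the torsion $T$ non-trivially and the restriction of $\overline{\alpha}$ to $\Pic(X)$ would lose information'' is not quite right---$\Pic(X)$ is free, so $\Pic(X)\cap T=0$ and $\overline{\alpha}|_{\Pic(X)}$ is always injective; what actually fails without purity is that the inverse $s=(\overline{\alpha}|_{\Z^r})^{-1}$ need not send $\overline{\alpha}(\Pic(X))$ back into $\Pic(X)$.
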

\begin{proof}
Define $s:\Z^r\to \Z^r\oplus T$ by $s(a)=(a,0)$. Then $\overline{\alpha}\circ s=id_{\Z_r}$ and $s\circ\overline{\alpha}|_{\Pic(X)}=id_{\Pic(X)}$. Then we have
for every $\x\in\Z^r$
$$\x\in\overline{\alpha}(\Pic(X))\Leftrightarrow s(\x)\in\Pic(X).$$
The result follows from Proposition \ref{prop:piccharacterization} by identifying $\x$ and $s(\x)$.
\end{proof}

\begin{example}
Let $\Sigma'_1$ be the fan defined in Example \ref{es:puro}. Then
$$\mathcal{I}^{\Sigma'_1}=\mathcal{I}^{\widehat{\Sigma}_1}=\{\{1,3\},\{2,3\}, \{3,5\},\{1,4\},    \{2,4\},\{4,5\}\}.$$
so that $$ \bigcap_{I\in \mathcal{I}^{\Sigma'_1}} \mathcal{L}_c(Q_I)=\Z (30,0)\oplus \Z (0,60).$$
Let $\x=(30x, 60y)\in \bigcap_{I\in \mathcal{I}^{\Sigma'_1}} \mathcal{L}_c(Q_I)$, with $x,y\in \Z$. For $I\in \mathcal{I}^{\Sigma'_1}$ we can write $\x=Q\cdot \mathbf{a}_I$ where $\mathbf{a}_{I}\in E_I$. The $\mathbf{a}_I$'s are easily calculated:
$$\begin{array}{lllcccccl}
\mathbf{a}_{\{1,3\}}&=&(&20y &0 &3x-6y & 0 & 0&)\\
\mathbf{a}_{\{2,3\}}&=& (&0 &30y &3x-3y & 0 & 0& )\\
\mathbf{a}_{\{3,5\}}&=& (&0 &0 &3x-24y & 0 & 60y& )\\
\mathbf{a}_{\{1,4\}}&=& (&20y &0 &0 & 5x-10y & 0& )\\
\mathbf{a}_{\{2,4\}}&=& (&0 &30y &0 & 5x-5y & 0& )\\
\mathbf{a}_{\{4,5\}}&=& (&0 &0 &0 & 5x-40y & 60y& )
\end{array}$$

Notice that, with the notation of (\ref{uIJ}),  $\u_{IJ}=\u_{IK}-\u_{KJ}$; then in order to calculate $\u_{IJ}$ for every $I,J\in\mathcal{I}^{\Sigma'_1} $ it suffices to compute $\u_{I_jI_{j+1}}$ for a sequence $I_1,\ldots , I_s$ such that $\langle Q_{I_j}\rangle $ and $\langle Q_{I_{j+1}}\rangle $ have a common facet and $\mathcal{I}^{\Sigma'_1}=\{I_1,\ldots , I_s\}$; in this way we obtain vectors having at most $r+1=3$ nonzero components:
$$\begin{array}{lllcccccl}
\mathbf{u}_{\{1,3\}\{2,3\}}&=& (& 20y&-30y&-3y&0&0& )\\
\mathbf{u}_{\{2,3\}\{3,5\}} & =& (& 0& 30y&21y&0&-60y&)\\
\mathbf{u}_{\{3,5\}\{4,5\}} &=& (&  0&0&3x-24y&-5x+40y& 0& )\\	
\mathbf{u}_{\{4,5\}\{1,4\}}&=& (& -20y&0&0&-30y&60y&)\\
\mathbf{u}_{\{1,4\}\{2,4\}}&=& (& 20y&-30y&0&-5y&0 &) \end{array}	
$$
Multiplying by the matrix $\Ga'$ found in (\ref{eq:Gamma}) we obtain
$$
\Ga'\cdot  \mathbf{u}_{\{1,3\}\{2,3\}}^T=0;\quad   \Ga'\cdot  \mathbf{u}_{\{2,3\}\{3,5\}}^T=-60y;\quad \Ga'\cdot  \mathbf{u}_{\{3,5\}\{4,5\}}^T=-5x+40y;$$
$$\Ga'\cdot  \mathbf{u}_{\{4,5\}\{1,4\}}^T=30y;\quad \quad\Ga'\cdot  \mathbf{u}_{\{1,4\}\{2,4\}}^T=-5y.
$$
Recall that $\Gamma'$ takes values in $\Z/2\Z$ and that for every $\u\in\mathcal{L}_r(\widehat V)$
$$\Ga'\cdot \u^T=0\hbox{ if and only if } \u\in\mathcal{L}_r(V');$$ then we see that $\u_{IJ}\in\ker(\Ga')$ for every $I,J\in \mathcal{I}^{\Sigma'_1}$ if and only if $x,y\in 2\Z$, that is if and only if $\x\in\Z (60,0)\oplus \Z(0,120)$. By Theorem \ref{thm:pic}, $\Pic(X')$ can be identified with the subgroup $\Z (60,0)\oplus \Z(0,120)$  in $\Cl(X')\simeq \Z^2\oplus \Z/2\Z$, according to what we established in Example \ref{es:puro}.
\end{example}

\bibliography{MILEA}

\newcommand{\noopsort}[1]{}
\begin{thebibliography}{10}

\bibitem{ADHL}
{\sc Arzhantsev, I., Derenthal, U., Hausen, J., and Laface, A.}
\newblock {\em Cox rings}, vol.~144 of {\em Cambridge Studies in Advanced
  Mathematics}.
\newblock Cambridge University Press, Cambridge, 2015.

\bibitem{CLS}
{\sc Cox, D.~A., Little, J.~B., and Schenck, H.~K.}
\newblock {\em Toric varieties}, vol.~124 of {\em Graduate Studies in
  Mathematics}.
\newblock American Mathematical Society, Providence, RI, 2011.

\bibitem{Ewald96}
{\sc Ewald, G.}
\newblock {\em Combinatorial convexity and algebraic geometry}, vol.~168 of
  {\em Graduate Texts in Mathematics}.
\newblock Springer-Verlag, New York, 1996.

\bibitem{EwaldIshida06}
{\sc Ewald, G., and Ishida, M.-N.}
\newblock Completion of real fans and {Z}ariski-{R}iemann spaces.
\newblock {\em Tohoku Math. J. (2) 58}, 2 (2006), 189--218.

\bibitem{Hartshorne}
{\sc Hartshorne, R.}
\newblock {\em Algebraic geometry}.
\newblock Springer-Verlag, New York-Heidelberg, 1977.
\newblock Graduate Texts in Mathematics, No. 52.

\bibitem{Rohrer11}
{\sc Rohrer, F.}
\newblock Completions of fans.
\newblock {\em J. Geom. 100}, 1-2 (2011), 147--169.

\bibitem{Erratum}
{\sc Rossi, M., and Terracini, L.}
\newblock Erratum to: A $\mathbb{Q}$--factorial complete toric variety is a
  quotient of a poly weighted space.
\newblock {\em \emph{to appear in} Ann. Mat. Pur. Appl.\/}, {\urlfont
  arXiv:1502.00879v3}.

\bibitem{RT-LA&GD}
{\sc Rossi, M., and Terracini, L.}
\newblock $\mathbb{Z}$--linear gale duality and poly weighted spaces {(PWS)}.
\newblock {\em Linear Algebra Appl. 495\/} (2016), 256--288; {\urlfont
  arXiv:1501.05244}.

\bibitem{RT-QUOT}
{\sc Rossi, M., and Terracini, L.}
\newblock A $\mathbb{Q}$--factorial complete toric variety is a quotient of a
  poly weighted space.
\newblock {\em Ann. Mat. Pur. Appl. 196\/} (2017), 325--347; {\urlfont
  arXiv:1502.00879}.

\bibitem{Rotman}
{\sc Rotman, J.~J.}
\newblock {\em Advanced modern algebra}, third~ed., vol.~165 of {\em Graduate
  Studies in Mathematics}.
\newblock American Mathematical Society, Providence, RI, 2015.

\end{thebibliography}
\bibliographystyle{acm}

\end{document}